\theoremstyle{plain}
\newtheorem{proposition}{Proposition}[section] 
\newtheorem{theorem}[proposition]{Theorem}
\newtheorem{lemma}[proposition]{Lemma}  
\newtheorem{corollary}[proposition]{Corollary}
\theoremstyle{definition}
\theoremstyle{remark}
\newtheorem{remark}[proposition]{Remark}
\DeclareMathOperator{\Isom}{\mathsf{Isom}}
\DeclareMathOperator{\supp}{supp}
\DeclareMathOperator{\dist}{d}
\DeclareMathOperator{\Fc}{\mathcal{F}}
\DeclareMathOperator{\Nc}{\mathcal{N}}
\DeclareMathOperator{\Tc}{\mathcal{T}}
\DeclareMathOperator{\Hb}{\mathbb{H}}
\DeclareMathOperator{\Nb}{\mathbb{N}}
\DeclareMathOperator{\Rb}{\mathbb{R}}
\DeclareMathOperator{\Sb}{\mathbb{S}}
\DeclareMathOperator{\Zb}{\mathbb{Z}}
\DeclareMathOperator{\Gsf}{\mathsf{G}}
\newcommand{\abs}[1]{\left|#1\right|}
\newcommand{\ip}[1]{\left\langle #1\right\rangle}
\newcommand{\Ga}{\Gamma}
\newcommand{\Hsf}{\mathsf{H}}
\newcommand{\R}{\mathbb{R}}
\newcommand{\ba}{\backslash}
\newcommand{\Mod}{\mathsf{Mod}}
\newcommand{\PMF}{\mathcal{PMF}}   
\newcommand{\UE}{\mathcal{UE}}
\newcommand{\Ic}{\mathcal{I}}
\newcommand{\QG}{\operatorname{QG}}
\begin{document}

\title{Determining subgroups via stationary measures}

\author[Kim]{Dongryul M. Kim}
\email{dongryul.kim97@gmail.com}
\address{Department of Mathematics, Yale University, USA}

\author[Zimmer]{Andrew Zimmer}
\email{amzimmer2@wisc.edu}
\address{Department of Mathematics, University of Wisconsin-Madison, USA}

\date{\today}

 \keywords{} 
 \subjclass[2020]{}

\begin{abstract}

In this paper, we consider random walks on the isometry groups of general metric spaces.  Under some mild conditions, we show that if two non-elementary random walks on a discrete subgroup of the isometry group have non-singular stationary measures, then subgroups generated by the random walks are commensurable. This result in particular applies to Gromov hyperbolic spaces and Teichm\"uller spaces. As a specific application, we prove singularity between stationary measures associated to random walks on different fiber subgroups of the fundamental group of a hyperbolic 3-manifold fibering over the circle.

\end{abstract}

\maketitle

\section{Introduction}

Given a group $\Gsf$ and subgroups $\Hsf_1, \Hsf_2 < \Gsf$, we say that $\Hsf_1$ and $\Hsf_2$ are \emph{commensurable} if their intersection $\Hsf_1 \cap \Hsf_2$ is of finite index in both $\Hsf_1$ and $\Hsf_2$. Susskind and Swarup studied the commensurability of  two subgroups of a Kleinian group in terms of their limit sets. More precisely, they proved the following rigidity theorem.

\begin{theorem}[Susskind--Swarup \cite{SS_limit}] \label{thm:SS}
Suppose $\Gsf < \Isom^+(\Hb^n)$ is a discrete subgroup and $\Hsf_1, \Hsf_2 < \Gsf$ are non-elementary geometrically finite subgroups. If the limit sets of $\Hsf_1$ and $\Hsf_2$ in $\partial  \Hb^{n}$ are the same, then $\Hsf_1$ and $\Hsf_2$ are commensurable.
\end{theorem}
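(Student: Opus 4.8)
Write $\Lambda := \Lambda(\Hsf_1) = \Lambda(\Hsf_2)$ and $\Hsf := \Hsf_1 \cap \Hsf_2$, and let $C := \operatorname{Hull}(\Lambda) \subseteq \Hb^n$ be the convex hull of $\Lambda$; since $\Hsf_1$, $\Hsf_2$, and $\Hsf$ all preserve $\Lambda$ they preserve $C$. Fix $o \in C$ and $\epsilon > 0$. The plan is a volume comparison for convex cores. Since $\Lambda$ is the limit set of the geometrically finite group $\Hsf_i$, Bowditch's characterizations of geometric finiteness provide a thick--thin decomposition of $\Nc_\epsilon(C)/\Hsf_i$ into a compact piece and finitely many cusps, whence $0 < \Vol\big(\Nc_\epsilon(C)/\Hsf_i\big) < \infty$ for $i = 1,2$ (the volume is positive because $\Nc_\epsilon(C)$ is open in $\Hb^n$, even if $C$ is lower-dimensional). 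On the other hand $\Nc_\epsilon(C)/\Hsf \to \Nc_\epsilon(C)/\Hsf_i$ is an orbifold covering of degree $[\Hsf_i : \Hsf] \in \Nb \cup \{\infty\}$, so
\[
  [\Hsf_1 : \Hsf] \cdot \Vol\big(\Nc_\epsilon(C)/\Hsf_1\big) \;=\; \Vol\big(\Nc_\epsilon(C)/\Hsf\big) \;=\; [\Hsf_2 : \Hsf]\cdot \Vol\big(\Nc_\epsilon(C)/\Hsf_2\big) .
\]
The theorem therefore reduces to the single assertion $\Vol\big(\Nc_\epsilon(C)/\Hsf\big) < \infty$: granting it, $[\Hsf_1 : \Hsf]$ and $[\Hsf_2 : \Hsf]$ are finite, i.e. $\Hsf_1$ and $\Hsf_2$ are commensurable.

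To obtain $\Vol\big(\Nc_\epsilon(C)/\Hsf\big) < \infty$ I would show $\Hsf$ is itself geometrically finite with $\Lambda(\Hsf) = \Lambda$; then $C = \operatorname{Hull}(\Lambda(\Hsf))$ and the finiteness is again Bowditch's. I would verify geometric finiteness of $\Hsf$ through Bowditch's dynamical criterion, i.e. by showing every point of $\Lambda(\Hsf)$ is conical or bounded parabolic for $\Hsf$, with finitely many $\Hsf$-orbits of the latter. The conical points come from a lemma essentially due to Susskind: \emph{if $\xi$ is a conical limit point of $\Hsf_1$ and $\xi \in \Lambda(\Hsf_2)$, then $\xi$ is a conical limit point of $\Hsf$}. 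The mechanism is pigeonhole in $\Gsf$: conicality of $\xi$ for the geometrically finite group $\Hsf_1$ lets one choose, for a point $y \in C$ far out toward $\xi$, an element $a \in \Hsf_1$ with $\dist(y, a\,o) \le R$ (for a uniform $R = R(\Hsf_1,\epsilon)$) and $a\,o$ within $R$ of a fixed geodesic ray $\rho$ to $\xi$; applying this to the points $b_j\,o$ of a sequence $b_j \in \Hsf_2$ with $b_j\,o \to \xi$ gives $a_j \in \Hsf_1$ with $\dist(b_j\,o, a_j\,o) \le R$, so that $a_j^{-1}b_j \in \Gsf$ moves $o$ a bounded amount. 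As $\Gsf$ is discrete, $a_j^{-1}b_j = g$ for a single $g \in \Gsf$ and infinitely many $j$; for two such indices $j,j'$ we get $b_{j'}b_j^{-1} = a_{j'}a_j^{-1} \in \Hsf_1 \cap \Hsf_2 = \Hsf$, and letting $j' \to \infty$ these elements carry the fixed point $b_j\,o$ to $b_{j'}\,o \to \xi$ along points within $2R$ of $\rho$ --- a conical approach to $\xi$ inside $\Hsf$. Using this lemma for $\Hsf_1$ and for $\Hsf_2$, and writing $\Lambda = \Lambda_r(\Hsf_i) \sqcup P_i$ with $P_i$ a finite union of $\Hsf_i$-orbits of bounded parabolic points, every point of $\Lambda \setminus (P_1 \cap P_2)$ is conical for $\Hsf$.

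For a point $p \in P_1 \cap P_2$ (bounded parabolic for both), identify $\operatorname{Stab}_{\Isom^+(\Hb^n)}(p)$ with the similarity group of $\partial\Hb^n \setminus \{p\} \cong \Rb^{n-1}$; since a loxodromic and a parabolic with a common fixed point generate a non-discrete group, the discrete subgroups $\operatorname{Stab}_{\Hsf_i}(p)$ contain no dilations, hence are virtually abelian groups of Euclidean isometries (Bieberbach) acting cocompactly on $\Lambda \setminus \{p\}$. Two such subgroups of the common discrete group $\Gsf$ intersect in a subgroup of finite index in each, so $\operatorname{Stab}_{\Hsf}(p)$ still acts cocompactly on $\Lambda \setminus \{p\}$; thus $p \in \Lambda(\Hsf)$ and $p$ is a bounded parabolic point for $\Hsf$. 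Together with the previous paragraph this gives $\Lambda(\Hsf) = \Lambda$ with every limit point conical or bounded parabolic for $\Hsf$; one then checks there are only finitely many $\Hsf$-orbits of bounded parabolic points, and Bowditch's criterion yields that $\Hsf$ is geometrically finite, completing the proof.

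I expect the Susskind-type conical lemma to be the main obstacle, specifically the clause that the element $a_j \in \Hsf_1$ can be chosen with $a_j\,o$ uniformly close to both $b_j\,o$ and the fixed ray $\rho$. This is where one needs the full geometric finiteness of $\Hsf_1$, not merely that $\xi$ is radial: one must rule out the sequence $b_j\,o$ escaping into ever-deeper cusp regions of $\Hb^n/\Hsf_1$ based at translated parabolic points accumulating on $\xi$, and when it does one absorbs the excursion using the bounded-parabolic structure at those cusps exactly as in the parabolic step above. A secondary point requiring care is the finiteness of the number of $\Hsf$-orbits of bounded parabolic points. Everything else --- Bowditch's criteria, the Bieberbach analysis of parabolic stabilizers, and the covering-degree bookkeeping --- is routine.
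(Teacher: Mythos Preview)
The paper does not prove Theorem~\ref{thm:SS}; it is quoted as a known result of Susskind--Swarup \cite{SS_limit} and serves only as motivation for the paper's own rigidity theorems via stationary measures. So there is no ``paper's proof'' to compare against.

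Your sketch is essentially the classical Susskind--Swarup argument: prove that $\Hsf = \Hsf_1 \cap \Hsf_2$ is itself geometrically finite with limit set $\Lambda$, then conclude finite index by the finite-volume convex core. The key conical-point lemma you isolate is exactly Susskind's lemma. One formulation issue: you phrase conicality of $\xi$ for $\Hsf_1$ as letting you find, for an \emph{arbitrary} point $y \in C$ near $\xi$, an element $a \in \Hsf_1$ with $a\,o$ close to both $y$ and the ray $\rho$. Conicality alone does not give this --- it only gives a specific sequence $a_j o$ tracking $\rho$. What you actually need (and what you allude to in your final paragraph) is the geometric finiteness of $\Hsf_2$ acting on $C$: take the conical sequence $a_j \in \Hsf_1$ with $a_j o$ tracking $\rho$, and use that $a_j o \in C$ together with the thick--thin structure of $C/\Hsf_2$ to find $b_j \in \Hsf_2$ with $\dist(a_j o, b_j o)$ bounded (after handling cusp excursions). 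So the roles of $\Hsf_1$ and $\Hsf_2$ in your pigeonhole step should be clarified. Your identification of this as the main obstacle is correct, and the parabolic and bookkeeping steps are routine as you say.
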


When $n = 3$, Anderson \cite{Anderson_intersections} and Yang--Jiang \cite{YJ_limit} relaxed the hypothesis in Theorem \ref{thm:SS}, using Canary's work on tame hyperbolic 3-manifolds \cite{Canary_ends} and the tameness conjecture (established by Agol \cite{Agol_Tameness} and Calegari--Gabai \cite{CG_Tameness}).

On the other hand, the commensurability rigidity as in Theorem \ref{thm:SS} is not true in general, e.g. a normal subgroup in a discrete group must have the same limit set as the entire group. Indeed, as a consequence of the virtual Haken conjecture proved by Agol \cite{Agol_Haken}, any closed hyperbolic 3-manifold has a finite cover fibering over the circle (\cite{Wise_fibering_announcement}, \cite{Wise_fibering}), and there is a plethora of different fibrations over the circle, as parametrized by Thurston's fibered cones \cite{Thurston_norm} (see also Fried's cones \cite{Fried_cone}). All such fibers give rise to surface subgroups contained in a cocompact lattice of $\Isom^+(\Hb^3)$ with the full limit set $\partial \Hb^3$. See Section \ref{subsec:CT} for detailed discussions.

Nevertheless, in this paper, we extend this commensurability rigidity to general subgroups, by shifting the perspective to considering
\begin{center}
\emph{random walks on subgroups and stationary measures.}
\end{center}
Noting that the limit set of a discrete subgroup in Theorem \ref{thm:SS} is the set of all accumulation points of its orbit, this new viewpoint is about the accumulation \emph{along a random trajectory}.

We now present our setup more precisely. Given a metric space $(X, \dist_X)$ and a countable group $\Gsf$ acting on $X$ by isometries, the random walk induced by a probability measure $\mathsf{m}$ on $\Gsf$ is given by 
$$
\omega_n := g_1 \cdots g_n \in \Gsf 
$$
where the $g_i$'s are independent identically distributed elements of $\Isom(X)$ each with distribution $\mathsf{m}$.

A bordification $\overline{X}$ of $X$ is a Hausdorff and second countable topological space to which $X$ is embedded as an open dense subset, such that the $\Gsf$-action on $X$ continuously extends to $\overline{X}$. An example of a bordification is the Gromov compactification of a proper Gromov hyperbolic metric space. 
Fixing a basepoint $o \in X$, the \emph{hitting measure} $\nu$ on $\overline{X}$ for the random walk is defined as follows: for a Borel subset $E \subset \overline{X}$, 
$$
\nu(E) := \operatorname{Prob} \left( \lim_{n \to + \infty} \omega_n o \text{ exists and is in } E \right).
$$
While $\nu$ is not always a probability measure, in the settings we consider it is indeed a probability measure supported on the boundary $\partial X := \overline{X} \smallsetminus X$. Note that the Markov property of the random walk implies that $\nu$ is \emph{$\mathsf{m}$-stationary}, i.e., $\mathsf{m} * \nu = \nu$.

We say that $\mathsf{m}$ has \emph{finite first moment} for $\dist_X$ if
$$
\mathbb{E} \left[ \dist_X(o, g o) \right] = \sum_{g \in \Gsf}  \dist_X(o, go)  \mathsf{m}(g) < + \infty.
$$
Throughout the paper, we consider two random walks on subgroups $\Hsf_1, \Hsf_2 < \Gsf$ with finite first moments for $\dist_X$. Particular examples we study include:
\begin{enumerate}
   \item $X$ is a geodesic Gromov hyperbolic space, $\Gsf < \Isom(X)$ acts metrically properly on $X$, and $\Hsf_1, \Hsf_2 < \Gsf$ are non-elementary subgroups.
   \item $X = \Hb^3$ is hyperbolic 3-space, $\Gsf < \Isom^+(\Hb^3)$ is the fundamental group of a hyperbolic 3-manifold fibering over the circle, and $\Hsf_1, \Hsf_2 < \Gsf$ are fiber subgroups. Along similar lines, $X = \Gsf$ is a hyperbolic free-by-cyclic group and $\Hsf_1, \Hsf_2 < \Gsf$ are free fiber subgroups.
   \item $X = \Tc(S)$ is the Teichm\"uller space of a closed surface $S$ with genus at least two and $\Hsf_1, \Hsf_2 < \Mod(S)$ are non-elementary subgroups of the mapping class group of $S$. 
   \end{enumerate}
In the above settings, each random walk has a unique stationary measure on an appropriate boundary, and is equal to the hitting measure. We will show that the non-singularity between stationary measures for random walks on $\Hsf_1$ and $\Hsf_2$ implies the commensurability of $\Hsf_1$ and $\Hsf_2$. Since the stationary measures are supported on the limit sets of $\Hsf_1$ and $\Hsf_2$, this extends the rigidity result of Susskind--Swarup (Theorem \ref{thm:SS}) to broader classes of groups using random walks.

We will prove a general statement  for a bordification of a geodesic metric space under certain hypotheses in Theorem \ref{thm:main universal}, and then deduce results in the introduction from it.

\subsection{Isometries on Gromov hyperbolic spaces}

Let $(X, \dist_X)$ be a geodesic Gromov hyperbolic space with the Gromov boundary $\partial  X$. We allow $X$ to be non-proper or non-separable, and hence $\partial X$ may be non-compact or not second countable. 

A countable subgroup $\Gsf < \Isom(X)$ of isometries is \emph{non-elementary} if it contains two loxodromic elements that fix disjoint pairs of points in $\partial  X$  and \emph{acts metrically properly} if  
$$
\# \{g \in \Gsf : g B \cap B \neq \emptyset \} < + \infty
$$
for any bounded subset $B \subset X$.

Maher--Tiozzo proved that for a probability measure $\mathsf{m}$ whose support generates a non-elementary subgroup $\Gsf$ as a group, there exists a unique $\mathsf{m}$-stationary measure $\nu$ on $\partial  X$ and is the same as the hitting measure for the random walk on $\Gsf$ induced by $\mathsf{m}$ \cite{MaherTiozzo2018}. 
Maher--Tiozzo assume that $X$ is separable, but as observed by Gruber--Sisto--Tessera~\cite[Remark 4]{GST2020} this assumption is not necessary for their results.
When $X$ is proper, this is due to Kaimanovich \cite{Kaimanovich2000}. 

Via stationary measures, we detect subgroups up to commensurability. 

\begin{theorem} \label{thm:GH}
Suppose $\Gsf < \Isom(X)$ is countable and acts metrically properly on $X$. Let $\Hsf_1, \Hsf_2 < \Gsf$ be non-elementary subgroups. For $j=1,2$ assume
\begin{itemize}
\item $\mathsf{m}_j$ is a probability measure on $\Hsf_j$ with finite first moment for $\dist_X$,
\item $\Hsf_j$ is generated by the support of $\mathsf{m}_j$, and 
\item $\nu_j$ is the $\mathsf{m}_j$-stationary measure on $\partial  X$.
\end{itemize} 
If  $\nu_1$ and $ \nu_2$ are not singular, then $\Hsf_1$ and $ \Hsf_2$ are commensurable. 
\end{theorem}

\begin{remark}\label{rmk:separable} As described by Gruber--Sisto--Tessera~\cite[Remark 4]{GST2020}, it is possible to construct an isometric action of $\Gsf$ on a separable geodesic  Gromov hyperbolic space $X'$ and a $\Gsf$-equivariant quasi-isometric embedding $X' \rightarrow X$. Using this, it suffices to prove Theorem~\ref{thm:GH} in the separable case. 
\end{remark}

As we will see in Proposition \ref{prop:normal subgps} and Proposition \ref{prop:fibration}, the moment condition is necessary.

As a corollary, we obtain the following singularity of stationary measures for the case that $X = \Gsf = \Hsf_2$ is a hyperbolic group.

\begin{corollary} \label{cor:hypgp}
   Suppose $\Gsf$ is a hyperbolic group and $\Hsf < \Gsf$ is a non-elementary subgroup of infinite index. Assume  respectively that 
   \begin{itemize}
      \item $\mathsf{m}_{\Gsf}$ and $\mathsf{m}_{\Hsf}$ are probability measures on $\Gsf$ and $\Hsf$ with finite first moments for a word metric on $\Gsf$,
      \item $\Gsf$ and $\Hsf$ are generated by the supports of $\mathsf{m}_{\Gsf}$ and $\mathsf{m}_{\Hsf}$, and 
      \item $\nu_{\Gsf}$ and $\nu_{\Hsf}$ are the $\mathsf{m}_G$-stationary and $\mathsf{m}_{\Hsf}$-stationary measures on $\partial \Gsf$.
   \end{itemize}
   Then $\nu_{\Gsf}$ and $\nu_{\Hsf}$ are mutually singular, i.e.,
   $$
   \nu_{\Gsf} \perp \nu_{\Hsf}.
   $$ 
\end{corollary}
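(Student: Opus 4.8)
The plan is to deduce this directly from Theorem~\ref{thm:GH}, applied to the action of $\Gsf$ on the metric space $X := (\Gsf, \dist_\Gsf)$, where $\dist_\Gsf$ is the word metric associated to the (finite) generating set used in the hypotheses and $\Gsf$ acts on $X$ by left translations. Since $\Gsf$ is a hyperbolic group, $X$ is a proper, hence separable, geodesic Gromov hyperbolic space, its Gromov boundary $\partial X$ is the usual boundary $\partial \Gsf$, and the left-translation action is metrically proper: if a bounded set $B \subset X$ satisfies $gB \cap B \neq \emptyset$, then $g$ lies in the finite set $B B^{-1}$. Thus the ambient hypotheses of Theorem~\ref{thm:GH} are met.

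Next I would check the remaining hypotheses with $\Hsf_1 := \Gsf$ and $\Hsf_2 := \Hsf$. The measures $\mathsf{m}_\Gsf$ and $\mathsf{m}_\Hsf$ have finite first moment for $\dist_\Gsf$ by assumption, and their supports generate $\Gsf$ and $\Hsf$ respectively. For non-elementarity: by hypothesis $\Hsf$ contains two infinite-order elements fixing distinct pairs of points in $\partial \Gsf$, and since $\Hsf < \Gsf$ the same two elements witness that $\Gsf$ itself is non-elementary. Finally, $\nu_\Gsf$ and $\nu_\Hsf$ are by construction the $\mathsf{m}_\Gsf$- and $\mathsf{m}_\Hsf$-stationary measures on $\partial \Gsf = \partial X$, which exist and are unique (and agree with the respective hitting measures) by Maher--Tiozzo \cite{MaherTiozzo2018}.

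With all hypotheses verified, Theorem~\ref{thm:GH} gives: if $\nu_\Gsf$ and $\nu_\Hsf$ are not mutually singular, then $\Gsf$ and $\Hsf$ are commensurable. But commensurability forces $\Gsf \cap \Hsf = \Hsf$ to have finite index in $\Gsf$, contradicting the assumption that $\Hsf$ has infinite index in $\Gsf$. Hence $\nu_\Gsf \perp \nu_\Hsf$. There is essentially no obstacle here beyond bookkeeping; the only point worth stating carefully is that the non-elementarity of $\Gsf$ — needed so that $\nu_\Gsf$ is defined and Theorem~\ref{thm:GH} applies — is automatic from the non-elementarity of the subgroup $\Hsf$.
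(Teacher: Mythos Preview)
Your argument is correct and matches the paper's approach exactly: the paper presents this as an immediate corollary of Theorem~\ref{thm:GH} in the case $X = \Gsf$ with one of the two subgroups taken to be $\Gsf$ itself, and then non-commensurability follows from the infinite-index hypothesis. Your observation that the non-elementarity of $\Gsf$ is inherited from that of $\Hsf$ is the only point requiring comment, and you handle it correctly.
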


The same statement holds when $\Gsf$ is a relatively hyperbolic group, replacing the Gromov boundary $\partial \Gsf$ above with the Bowditch boundary of $\Gsf$, and the word metric on $\Gsf$ with the metric on a Gromov model for $\Gsf$.

\subsection{Fibrations of hyperbolic 3-manifolds and Cannon--Thurston maps} \label{subsec:CT}

We present a different formulation of Theorem \ref{thm:GH} for some special cases. Suppose   a closed hyperbolic 3-manifold  $M$ admits a fibration
$$
S \to M \to \Sb^1
$$
over the circle with a fiber $S \subset M$. We simply call $M$ a fibered hyperbolic 3-manifold. Such $M$ has infinitely many different fibrations, parametrized by Thurston's fibered cones \cite{Thurston_norm}. It follows from Theorem \ref{thm:GH} that they can all be  distinguished by random walks and stationary measures.

\begin{corollary} \label{cor:fibration}
   Suppose $M$ is a fibered hyperbolic 3-manifold and $S_1, S_2 \subset M$ are fibers of two different fibrations of $M$ over the circle. For $j = 1, 2$ assume
\begin{itemize}
\item $\mathsf{m}_j$ is a probability measure on $\pi_1(S_j)$ with finite first moment for a word metric on $\pi_1(M)$,
\item $\pi_1(S_j)$ is generated by the support of $\mathsf{m}_j$, and 
\item $\nu_j$ is the $\mathsf{m}_j$-stationary measure on $\partial  \pi_1(M)$.
\end{itemize} 
Then $\nu_1$ and $\nu_2$ are mutually singular, i.e.,
$$
\nu_1 \perp \nu_2.
$$
\end{corollary}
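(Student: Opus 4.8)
The plan is to apply Theorem \ref{thm:GH} to the action of $\pi_1(M)$ on its Cayley graph and then observe that fiber subgroups of distinct fibrations can never be commensurable, so that the non-singularity alternative furnished by Theorem \ref{thm:GH} cannot occur.

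First I would set $\Gsf = \pi_1(M)$ and let $X$ be the Cayley graph of $\Gsf$ with respect to a finite generating set, equipped with the word metric $\dist_X$; any two such metrics are bi-Lipschitz, so the finite first moment hypothesis does not depend on the choice. Since $M$ is a closed hyperbolic $3$-manifold, $\Gsf$ is a non-elementary word hyperbolic group, so $X$ is a proper, separable, geodesic Gromov hyperbolic space with $\partial X = \partial \pi_1(M)$, and $\Gsf$ acts on $X$ by isometries, freely and cocompactly, hence metrically properly. Put $\Hsf_j = \pi_1(S_j)$. The fibration $S_j \to M \to \Sb^1$ yields a short exact sequence $1 \to \pi_1(S_j) \to \pi_1(M) \xrightarrow{\phi_j} \Zb \to 1$; in particular $\pi_1(S_j)$ is an infinite normal subgroup of the non-elementary hyperbolic group $\Gsf$ (it is infinite since $S_j$ has genus $\ge 2$, a sphere or torus fiber being incompatible with $M$ hyperbolic), and hence is non-elementary. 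Thus the three bulleted hypotheses on $\mathsf{m}_j$ are exactly the hypotheses of Theorem \ref{thm:GH} for this $X$, with $\nu_j$ supported on $\partial X = \partial\pi_1(M)$, so Theorem \ref{thm:GH} gives: if $\nu_1$ and $\nu_2$ are not singular, then $\pi_1(S_1)$ and $\pi_1(S_2)$ are commensurable.

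It remains to rule this out. Suppose $\pi_1(S_1)$ and $\pi_1(S_2)$ were commensurable, and let $N = \pi_1(S_1) \cap \pi_1(S_2)$, a finite-index subgroup of each. The restriction of $\phi_2$ to $\pi_1(S_1)$ has kernel $\pi_1(S_1) \cap \pi_1(S_2) = N$, so $\phi_2(\pi_1(S_1)) \cong \pi_1(S_1)/N$ is a finite subgroup of $\Zb$, hence trivial; therefore $\pi_1(S_1) \subseteq \ker\phi_2 = \pi_1(S_2)$. The symmetric argument gives $\pi_1(S_2) \subseteq \pi_1(S_1)$, so $\pi_1(S_1) = \pi_1(S_2)$. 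But then $\phi_1$ and $\phi_2$ are surjections $\pi_1(M) \to \Zb$ with the same kernel, which forces $\phi_1 = \pm\phi_2$, i.e.\ $S_1$ and $S_2$ are fibers of the same fibration up to reversing the orientation of the base circle, contrary to hypothesis. Hence $\pi_1(S_1)$ and $\pi_1(S_2)$ are not commensurable, and we conclude $\nu_1 \perp \nu_2$.

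The only substantive input here is Theorem \ref{thm:GH}; the remaining steps are routine and I do not anticipate a genuine obstacle. The one place requiring a little care is the translation of "two different fibrations" into "distinct fiber subgroups" (equivalently, non-proportional classes in $H^1(M;\Rb)$), which is standard from Thurston's description of fibered cones, and the short exact sequence above makes the implication needed in the commensurability step transparent.
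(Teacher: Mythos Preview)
Your proposal is correct and follows the same approach as the paper: the paper presents Corollary~\ref{cor:fibration} as a direct consequence of Theorem~\ref{thm:GH}, leaving the non-commensurability of distinct fiber subgroups implicit. Your argument for that step (finite image in $\Zb$ forces containment, hence equality of kernels, hence $\phi_1=\pm\phi_2$) is exactly the standard justification and fills in the detail the paper omits.
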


Since $M$ is a closed hyperbolic 3-manifold, $\partial  \pi_1(M)$ can be identified with $\partial  \Hb^3$ in Corollary \ref{cor:fibration}.

For a fibered hyperbolic 3-manifold $M$ with a fiber $S \subset M$, we can regard $\pi_1(S)$ and $\pi_1(M)$ as discrete subgroups of $\Isom^+(\Hb^2)$ and $\Isom^+(\Hb^3)$ respectively. Then the inclusion $S \subset M$ induces a $\pi_1(S)$-equivariant embedding $\Hb^2 \to \Hb^3$. In \cite{CT_map}, Cannon and Thurston showed that this embedding continuously extends to a space-filling curve $\partial  \Hb^2 \to \partial  \Hb^3$, which is now called the \emph{Cannon--Thurston map} for the fibration $S \to M \to \Sb^1$.

\begin{corollary} \label{cor:CT}
   Suppose $M$ is a fibered hyperbolic 3-manifold with a fiber $S \subset M$ and the associated Cannon--Thurston map $f : \partial  \Hb^2 \to \partial  \Hb^3$. Assume respectively that 
\begin{itemize}
\item $\mathsf{m}_S$ and $\mathsf{m}_M$ are probability measures on $\pi_1(S)$ and $\pi_1(M)$ with finite first moments for the metric on $\Hb^3$,
\item $\pi_1(S)$ and $\pi_1(M)$ are  generated by the supports of $\mathsf{m}_S$ and $\mathsf{m}_M$, and 
\item $\nu_S$ and $\nu_{M}$ are the $\mathsf{m}_S$-stationary measure on $\partial  \Hb^2$ and the $\mathsf{m}_M$-stationary measure on $\partial  \Hb^3$.
\end{itemize} 
Then $f_* \nu_S$ and $\nu_M$ are mutually singular, i.e., 
$$
f_* \nu_S \perp \nu_M.
$$
\end{corollary}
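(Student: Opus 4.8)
The plan is to transport the stationary measure $\nu_S$ from $\partial\Hb^2$ up to $\partial\Hb^3$ via the Cannon--Thurston map and then invoke Corollary~\ref{cor:hypgp} with $\Gsf=\pi_1(M)$ and $\Hsf=\pi_1(S)$. To set the stage: since $M$ is a closed hyperbolic $3$-manifold, $\pi_1(M)$ is a cocompact lattice in $\Isom^+(\Hb^3)$, hence a hyperbolic group with $\partial\pi_1(M)$ canonically identified with $\partial\Hb^3$; and the fibration $S\to M\to\Sb^1$ produces an exact sequence $1\to\pi_1(S)\to\pi_1(M)\to\Zb\to 1$, so $\pi_1(S)$ is a normal subgroup of infinite index. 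As a cocompact Fuchsian group $\pi_1(S)$ acts non-elementarily on $\Hb^2$, and as a finitely generated, torsion-free, non-cyclic Kleinian group it acts non-elementarily on $\Hb^3$ as well. Hence, by \cite{MaherTiozzo2018}, the $\mathsf{m}_S$-random walk on $\pi_1(S)$ admits a unique stationary measure on each of $\partial\Hb^2$ and $\partial\Hb^3$; the first is $\nu_S$, and I denote the second by $\nu_S'$.

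The crux is the identity $f_*\nu_S=\nu_S'$. Recall that the Cannon--Thurston map $f\colon\partial\Hb^2\to\partial\Hb^3$ is the continuous, $\pi_1(S)$-equivariant extension of the equivariant embedding $\Hb^2\to\Hb^3$ \cite{CT_map}. Equivariance of $f$ gives $g_*\circ f_*=f_*\circ g_*$ for all $g\in\pi_1(S)$, so
\[
\mathsf{m}_S*(f_*\nu_S)=f_*(\mathsf{m}_S*\nu_S)=f_*\nu_S;
\]
thus $f_*\nu_S$ is an $\mathsf{m}_S$-stationary Borel probability measure on $\partial\Hb^3$, and by the uniqueness recalled above it must equal $\nu_S'$. (Equivalently: by \cite{MaherTiozzo2018} the sample path $\omega_n o$ converges almost surely in $\Hb^2\cup\partial\Hb^2$ to a point with law $\nu_S$; applying the continuous extension of the equivariant embedding, the same path converges in $\Hb^3\cup\partial\Hb^3$ to its $f$-image, so $f_*\nu_S$ is the hitting measure on $\partial\Hb^3$, which is $\nu_S'$.)

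It remains to check the hypotheses of Corollary~\ref{cor:hypgp} for $\Gsf=\pi_1(M)$ and $\Hsf=\pi_1(S)$. By cocompactness and the \v{S}varc--Milnor lemma, a word metric on $\pi_1(M)$ is quasi-isometric to the orbit pseudometric $(g,h)\mapsto\dist_{\Hb^3}(g o,h o)$ for a basepoint $o\in\Hb^3$; since having a finite first moment is unaffected by passing to a quasi-isometric metric, the hypothesis that $\mathsf{m}_S$ and $\mathsf{m}_M$ have finite first moment for $\dist_{\Hb^3}$ upgrades to finite first moment for a word metric on $\pi_1(M)$. We already observed that $\pi_1(S)$ is non-elementary of infinite index in $\pi_1(M)$, and by hypothesis the supports of $\mathsf{m}_M$ and $\mathsf{m}_S$ generate $\pi_1(M)$ and $\pi_1(S)$. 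Corollary~\ref{cor:hypgp} therefore gives $\nu_M\perp\nu_S'$, and since $\nu_S'=f_*\nu_S$ this is precisely $f_*\nu_S\perp\nu_M$.

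I expect the only delicate point to be the second paragraph, i.e.\ checking that the Cannon--Thurston map genuinely carries the downstairs stationary measure to the upstairs one; everything else is routine verification that Corollary~\ref{cor:hypgp} applies. It is worth noting that imposing the moment condition with respect to $\dist_{\Hb^3}$ rather than a word metric on $\pi_1(S)$ is essential: the embedding $\Hb^2\hookrightarrow\Hb^3$ is very far from a quasi-isometric embedding---its image accumulates on all of $\partial\Hb^3$---so the $\pi_1(S)$-orbit in $\Hb^3$ is not quasi-isometric to $\pi_1(S)$ equipped with a word metric.
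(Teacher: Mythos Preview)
Your proof is correct and follows the natural derivation that the paper leaves implicit: identify $f_*\nu_S$ with the unique $\mathsf{m}_S$-stationary measure on $\partial\Hb^3$ (via $\pi_1(S)$-equivariance of the Cannon--Thurston map and Maher--Tiozzo uniqueness), then apply Corollary~\ref{cor:hypgp} (or equivalently Theorem~\ref{thm:GH} with $X=\Hb^3$ directly, which would spare you the \v{S}varc--Milnor step). The only point the paper adds explicitly is the remark you echo at the end, that the $\dist_{\Hb^3}$-moment hypothesis is genuinely weaker than a $\dist_{\Hb^2}$-moment hypothesis on $\mathsf{m}_S$.
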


Note that since $\pi_1(S)$ acts cocompactly on $\Hb^2$, the moment condition on $\mathsf{m}_S$ for the metric on $\Hb^3$ is weaker than the one for the metric on $\Hb^2$.

\begin{remark} 
   Since $\pi_1(S)$ and $\pi_1(M)$ can be regarded as cocompact lattices in $\Isom^+(\Hb^2)$ and $\Isom^+(\Hb^3)$, it follows from the work of Lyons--Sullivan \cite{LS_RW}  that Lebesgue measures on $\partial \Hb^2$ and $\partial \Hb^3$ are respectively stationary measures for random walks on $\pi_1(S)$ and $\pi_1(M)$ with finite exponential moments (see also Ballmann--Ledrappier \cite{BL_harmonic}). Therefore, the same singularity results as in Corollary \ref{cor:fibration} and Corollary \ref{cor:CT} hold  after replacing stationary measures with the Lebesgue measure on $\partial \Hb^3$ or the pushforward of the Lebesgue measure on $\partial \Hb^2$ through Cannon--Thurston map. The singularity between Lebesgue measures under the Cannon--Thurston map was first proved by Tukia \cite{Tukia1989} as a generalization of Mostow's rigidity theorem.

   We also refer to the work of Connell--Muchnik (\cite{CM_Harmonic_PS}, \cite{CM_Harmonicity_Gibbs}) 
   for general quasi-convex groups of isometries on $\operatorname{CAT}(-1)$-spaces and random walks on them whose stationary measures are quasi-conformal measures, or more general Gibbs measures.
\end{remark} 

\begin{remark}
   \
   \begin{enumerate}
      \item The singularity among stationary measures and Lebesgue measures through the Cannon--Thurston map was first proved by Gadre--Maher--Pfaff--Uyanik in \cite{GMPU_CT}, under conditions of  finite exponential moments and groups being generated by the supports as semigroups. They also proved quantitative results on quasi-geodesics. We relax the moment condition to finite first moment and the semigroup condition to a subgroup condition in Corollary~\ref{cor:CT}. 
      
      \item The notion of Cannon--Thurston map was generalized further by Mj \cite{Mj_CT} to hyperbolic groups and their normal subgroups with hyperbolic quotients. Analogous rigidity results for those generalized Cannon--Thurston maps can also be deduced from Theorem \ref{thm:GH} or Corollary \ref{cor:hypgp}. We refer to the work of Kapovich--Lustig \cite{KL_CT} for an explicit description of Cannon--Thurston maps for free-by-cyclic groups with hyperbolic iwip monodromies.
   \end{enumerate}
   
\end{remark}

\subsection{Mapping class groups and Teichm\"uller spaces}

Let $S$ be a closed connected orientable surface of genus at least two. The Teichm\"uller space $\Tc(S)$ of $S$ is the space of all marked hyperbolic structures on $S$, and it admits a natural metric called the Teichm\"uller metric $\dist_{\Tc}$ which is proper and geodesic. 
Thurston compactified the Teichm\"uller space by the space $\PMF$ of projective measured foliations on $S$ \cite{Thurston_geometry_dynamics}. This is now referred to as Thurston's compactification of $\Tc(S)$, and $\PMF$ is also called Thurston's boundary of $\Tc(S)$. 

The mapping class group $\Mod(S)$ of the surface $S$ is the group of isotopy classes of orientation-preserving homeomorphisms on $S$. The natural $\Mod(S)$-action on $\Tc(S)$ is proper and by isometries, and in fact $\Mod(S)$ is more or less the full isometry group of $(\Tc(S), \dist_{\Tc})$ as shown by Royden \cite{Royden_aut} and by Earle and Kra \cite{EK_holomorphic}, \cite{EK_isometries} (see also \cite{Ivanov_isometries}). Thurston also showed that the $\Mod(S)$-action on $\Tc(S)$ continuously extends to the action on Thurston's compactification $\Tc(S) \cup \PMF$.
A subgroup $\mathsf{H} < \Mod(S)$ is called \emph{non-elementary} if $\mathsf{H} $ contains two pseudo-Anosov mapping classes with distinct pairs of  invariant projective measured foliations.

In \cite{KM_MCG}, Kaimanovich and Masur showed that for a probability measure $\mathsf{m}$ on a non-elementary subgroup $\mathsf{H}  < \Mod(S)$ such that the support of $\mathsf{m}$ generates $\Hsf$ as a group, there exists a unique $\mathsf{m}$-stationary measure $\nu$ on $\PMF$, and is equal to the hitting measure of the random walk induced by $\mathsf{m}$. We show that stationary measures determine subgroups up to commensurability.

\begin{theorem} \label{thm:MCG}
   Suppose $\Hsf_1, \Hsf_2 < \Mod(S)$ are non-elementary subgroups. For $j=1,2$ assume
\begin{itemize} 
\item $\mathsf{m}_j$ is a probability measure on $\Hsf_j$ with finite first moment for $\dist_{\Tc}$,
\item $\Hsf_j$ is generated by the support of $\mathsf{m}_j$ as a group, and 
\item $\nu_j$ is the $\mathsf{m}_j$-stationary measure on $\PMF$.
\end{itemize} 
If  $\nu_1$ and $ \nu_2$ are not singular, then $\Hsf_1$ and $ \Hsf_2$ are commensurable. 
\end{theorem}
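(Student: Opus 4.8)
The plan is to deduce Theorem~\ref{thm:MCG} from the general criterion of Theorem~\ref{thm:main universal}, in the same way that Theorem~\ref{thm:GH} is obtained in the Gromov hyperbolic case. We take $X = \Tc(S)$ with the Teichm\"uller metric $\dist_{\Tc}$, let $\overline{X} = \Tc(S) \cup \PMF$ be Thurston's compactification, and set $\Gsf = \Mod(S)$. Several hypotheses of Theorem~\ref{thm:main universal} are then classical. The pair $(\Tc(S), \dist_{\Tc})$ is a proper geodesic metric space; $\overline{X}$ is compact, hence second countable and Hausdorff, and $\Mod(S)$ acts continuously on it (Thurston); the $\Mod(S)$-action on $\Tc(S)$ is properly discontinuous, hence metrically proper; pseudo-Anosov mapping classes act on $\PMF$ with north--south dynamics, and each $\Hsf_j$, being non-elementary, contains two pseudo-Anosov elements with distinct pairs of invariant projective measured foliations, which supplies the contracting boundary dynamics needed for the ping-pong arguments. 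Finally, by Kaimanovich--Masur \cite{KM_MCG}, each $\mathsf{m}_j$-random walk converges almost surely in $\overline{X}$, and $\nu_j$ is the unique $\mathsf{m}_j$-stationary measure on $\PMF$ and coincides with the hitting measure.

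The substantive point is to verify the remaining hypothesis of Theorem~\ref{thm:main universal}: that $\nu_j$-almost every point of $\PMF$ is a conical (radial) limit point of $\Hsf_j$, together with the hyperbolic-type visibility/shadow regularity of $\overline{X}$ that the theorem requires. For this we use that, by Kaimanovich--Masur \cite{KM_MCG}, $\nu_j$ is concentrated on the subset $\UE \subset \PMF$ of uniquely ergodic (minimal, filling) foliations. By Masur's criterion the Teichm\"uller geodesic ray from the basepoint $o$ to such a foliation is recurrent to a thick part $\Tc_{\varepsilon}(S)$ for some $\varepsilon > 0$, and along that recurrence the metric geometry is coarsely hyperbolic (Minsky's product regions theorem, together with the work of Rafi and Klarreich). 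On the other hand, finite first moment for $\dist_{\Tc}$ yields positive drift and sublinear tracking of the sample path $\omega_n o \in \Hsf_j o$ along the ray toward the limiting foliation; combining recurrence to the thick part with this tracking and the coarse hyperbolicity there, one extracts a sequence $h_k \in \Hsf_j$ whose orbit points $h_k o$ lie in a uniformly bounded neighborhood of $[o,\xi)$ and converge to $\xi$, i.e. $\xi$ is a conical limit point of $\Hsf_j$ in the sense demanded by Theorem~\ref{thm:main universal}. With every hypothesis in place, Theorem~\ref{thm:main universal} applies and gives the commensurability of $\Hsf_1$ and $\Hsf_2$ whenever $\nu_1 \not\perp \nu_2$.

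The main obstacle is precisely this last verification. Because $\Tc(S)$ is not Gromov hyperbolic, one cannot invoke Theorem~\ref{thm:GH}; instead one must confine attention to the thick part, where Teichm\"uller geodesics and the subset $\UE$ of Thurston's boundary recover enough hyperbolic-like behavior, and then show that the random walk---which a priori sees only the Teichm\"uller-metric scale---interacts with this thin structure strongly enough to produce genuine conical limit points with the uniformity Theorem~\ref{thm:main universal} needs (a bounded, not merely sublinear, neighborhood of $[o,\xi)$ along a subsequence, and uniform shadow estimates for $\nu_j$ on $\UE$). Once this is arranged, the ergodic-theoretic core of Theorem~\ref{thm:main universal}---upgrading non-singularity of $\nu_1,\nu_2$ to a common conical limit set of positive measure, and then running a Margulis-type argument in the thick part to force $\Hsf_1 \cap \Hsf_2$ to have finite index in both $\Hsf_1$ and $\Hsf_2$---carries over directly.
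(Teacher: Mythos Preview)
Your overall strategy---reduce Theorem~\ref{thm:MCG} to Theorem~\ref{thm:main universal} with $X=\Tc(S)$, $\overline{X}=\Tc(S)\cup\PMF$, $\Gsf=\Mod(S)$---is exactly what the paper does. The problem is that you have misidentified what Theorem~\ref{thm:main universal} actually asks for. Its hypotheses are not ``$\nu_j$-a.e.\ point is a conical limit point of $\Hsf_j$'' together with ``shadow regularity'' or a ``Margulis-type argument''; none of these notions appear in the paper. What must be checked is that each $\mathsf{m}_j$ is \emph{well-behaved} in the sense of conditions \ref{item:limits exist}--\ref{item:asymptotic} of Section~\ref{sec:standing assumptions}, for a suitable $\Gsf$-equivariant map $P:\partial X\times\partial X\to\mathscr{P}(\QG(X))$, and that the drift is positive. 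Your ``main obstacle'' is therefore aimed at the wrong target.

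The actual verification is both shorter and requires less machinery than what you propose. One defines $P(y^-,y^+)$ to be the (unique) Teichm\"uller geodesic joining $y^-$ to $y^+$ when both lie in $\UE$ and are distinct, and $\emptyset$ otherwise. Kaimanovich--Masur already gives \ref{item:limits exist}, \ref{item:non-atomic}, and the fact that $\nu_j,\hat\nu_j$ are supported on $\UE$; \ref{item:finite D} is then immediate. Condition \ref{item:bounded convergence} is a lemma in \cite{KM_MCG}, and \ref{item:asymptotic} is Masur's theorem that two Teichm\"uller rays with the same uniquely ergodic endpoint are asymptotic. Positive drift comes from Tiozzo's sublinear tracking. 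There is no need for Masur's recurrence criterion, Minsky's product regions, Rafi--Klarreich, or any extraction of conical limit points; those tools are addressing a hypothesis the theorem does not have.
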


\begin{remark}
   In the forthcoming work, Eskin--Mirzakhani--Rafi \cite{EMR} show that the Lebesgue measure on $\PMF$ is a stationary measure for some random walk on $\Mod(S)$ with finite first moment for the \emph{Teichm\"uller metric} $\dist_{\Tc}$.
   Together with this, Theorem \ref{thm:MCG} implies the singularity of the Lebesgue measure on $\PMF$ and the stationary measure of the random walk on an infinite-index subgroup of $\Mod(S)$ with finite first moment for $\dist_{\Tc}$. Previously, Gadre--Maher--Tiozzo \cite{GMT_Teichmueller} proved singularity of stationary measures and the Lebesgue measure for random walks whose step distribution has finite first moment for the \emph{word metric} on $\Mod(S)$. In the case where the step distribution is supported on an infinite-index subgroup of  $\Mod(S)$, Theorem~\ref{thm:MCG} relaxes the moment condition to finite first moment for the \emph{Teichm\"uller metric}.\end{remark}

As a special example, let $\Ic < \Mod(S)$ be the \emph{Torelli group}, which consists of mapping classes acting trivially on the first homology group $H_1(S)$. 
As $\Ic$ is the kernel of the symplectic representation 
$$
\Mod(S) \twoheadrightarrow \mathsf{Sp}(2g, \Zb)
$$
where $g$ is the genus of $S$, the Torelli group $\Ic$ is an infinite-index normal subgroup of $\Mod(S)$. Moreover, one can deduce from Thurston's construction of pseudo-Anosov mapping classes  \cite{Thurston_geometry_dynamics} that $\Ic$ is non-elementary.

Hence, $\Ic$ is not commensurable to $\Mod(S)$ while its action on $\PMF$ is not dynamically distinguishable from that of $\Mod(S)$, i.e., both act minimally on $\PMF$ (\cite{FLP_Thurston}, \cite{MP_dynamics}). On the other hand, Theorem \ref{thm:MCG} implies that stationary measures are distinguished.

\begin{corollary}
   Suppose respectively that 
   \begin{itemize}
      \item $\mathsf{m}_{\Ic}$ and $\mathsf{m}$ are probability measures on $\Ic$ and $\Mod(S)$ with finite first moments for $\dist_{\Tc}$,
      \item $\Ic$ and $\Mod(S)$ are generated by the supports of $\mathsf{m}_{\Ic}$ and $\mathsf{m}$, and
      \item $\nu_{\Ic}$ and $\nu$ are the $\mathsf{m}_{\Ic}$-stationary and $\mathsf{m}$-stationary measures on $\PMF$.
   \end{itemize}
   Then $\nu_{\Ic}$ and $\nu$ are mutually singular, i.e., 
$$
\nu_{\Ic} \perp \nu.
$$
 \end{corollary}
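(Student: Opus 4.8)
The plan is to deduce this directly from Theorem~\ref{thm:MCG}, applied with $\Hsf_1 = \Ic$ and $\Hsf_2 = \Mod(S)$. First I would verify that both subgroups satisfy the hypotheses of that theorem. The group $\Mod(S)$ is visibly non-elementary, as it contains pairs of independent pseudo-Anosov mapping classes with distinct pairs of invariant projective measured foliations. For the Torelli group $\Ic$, non-elementariness follows from Thurston's construction of pseudo-Anosov maps: one produces two pseudo-Anosov elements lying in $\Ic$ (for instance from suitable bounding pair maps, or from Thurston's construction applied to filling pairs of multicurves whose associated twist products act trivially on $H_1(S)$) with distinct pairs of invariant projective measured foliations. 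The remaining hypotheses of Theorem~\ref{thm:MCG} — finite first moment for $\dist_{\Tc}$, generation by the supports, and that $\nu_{\Ic}$ and $\nu$ are the associated $\mathsf{m}_{\Ic}$- and $\mathsf{m}$-stationary measures on $\PMF$ — are precisely the assumptions of the corollary.

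Granting this, I would argue by contraposition. Suppose $\nu_{\Ic}$ and $\nu$ are not mutually singular. Then Theorem~\ref{thm:MCG} gives that $\Ic$ and $\Mod(S)$ are commensurable; that is, $\Ic \cap \Mod(S) = \Ic$ has finite index in both $\Ic$ and $\Mod(S)$. But $\Ic$ is the kernel of the symplectic representation $\Mod(S) \twoheadrightarrow \mathsf{Sp}(2g,\Zb)$, and $\mathsf{Sp}(2g,\Zb)$ is infinite, so $[\Mod(S) : \Ic] = +\infty$. This contradiction shows that $\nu_{\Ic} \perp \nu$.

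The only step requiring genuine input is the verification that $\Ic$ is non-elementary, and that is classical (it is already recorded in the discussion preceding the statement). Everything else is a formal consequence of Theorem~\ref{thm:MCG} together with the elementary observation that an infinite-index subgroup is never commensurable to the ambient group; so I do not expect any real obstacle here. The content of the corollary is entirely packaged inside Theorem~\ref{thm:MCG}.
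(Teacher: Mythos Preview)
Your proposal is correct and matches the paper's intended argument exactly: the corollary is stated immediately after the paper records that $\Ic$ is non-elementary (via Thurston's construction) and of infinite index in $\Mod(S)$, and is meant to follow from Theorem~\ref{thm:MCG} by precisely the contrapositive you wrote. There is no separate proof in the paper beyond this setup, so you have reconstructed it in full.
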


\subsection{Organization} In Section \ref{sec:well-behaved}, we define the abstract setting for random walks and state the most general version of our commensurability rigidity. Sections \ref{sec:RW track QG}--\ref{sec:non-sing} are devoted to the proof of this general statement. In Section \ref{sec:RW track QG}, we prove that random walks track quasi-geodesics. The complementary phenomenon that quasi-geodesics track random walks is proved in Section \ref{sec:QG track RW}. In Section \ref{sec:non-sing}, we relate those trackings to stationary measures on quotients of groups. The necessity of the moment condition is discussed in Section \ref{sec:normal}, where we present examples that realize stationary measures of random walks on ambient groups by random walks on infinite-index normal subgroups.

\subsection*{Acknowledgements} 
We thank Yair Minsky for asking Kim about the singularity of stationary measures associated to random walks on different fiber subgroups (as in Corollary \ref{cor:fibration}). We also thank Inhyeok Choi, David Fisher, Caglar Uyanik, and the referee for useful comments. Kim expresses his special gratitude to his Ph.D. advisor Hee Oh for her encouragement and guidance. Kim also thanks the University of Wisconsin--Madison for hospitality during a visit in October 2025. 

Zimmer was partially supported by a Sloan research fellowship and grants DMS-2105580 and DMS-2452068 from the National Science Foundation.

\section{Well-behaved random walks and universal rigidity theorem}  \label{sec:well-behaved}

In this section, we introduce the abstract setup we consider throughout the paper and state our most general version of the rigidity theorem (Theorem \ref{thm:main universal}), from which all results in the introduction follow. This abstract setup is a modification of one considered by Tiozzo  \cite[Section 2]{Tiozzo_sublinear}.

\subsection{The abstract setup and main result}\label{sec:standing assumptions}

Let $(X, \dist_X)$ be a geodesic metric space and $\Gsf$ be a countable group acting by isometries on $X$. 
A \emph{bordification} $\overline{X}$ of $X$ is a Hausdorff and second countable topological space such that $X$ is homeomorphic to an open dense subset of $\overline{X}$ and that the $\Gsf$-action on $X$ continuously extends to $\overline{X}$, regarding $X$ as a subset of $\overline{X}$. We denote by $\partial X := \overline{X} \smallsetminus X$ the \emph{boundary} of $X$. 

For $a \ge 1$ and $K \ge 0$, a map $\sigma : \Rb \to X$ or its image is called a (bi-infinite) \emph{$(a, K)$-quasi-geodesic} if for all $t, s \in \Rb$,
$$
\frac{1}{a} \abs{t - s} - K \le \dist_X( \sigma(t), \sigma(s)) \le a \abs{t - s} + K.
$$
In forgetting a parametrization, we keep its orientation so that the image of a quasi-geodesic in $X$ comes with an orientation.

Let $\QG(X)$ denote the set of oriented non-parametrized quasi-geodesics in $X$ and let $ \mathscr{P}( \QG(X))$ denote the power set of $\QG (X)$. 
Suppose $P : \partial X \times \partial X \rightarrow~\mathscr{P}(\QG (X))$ is a $\Gsf$-equivariant map with the property that there exist $a \geq 1$ and $K \geq 0$ such that for each $(y^-, y^+) \in \partial X \times \partial X$, either $P(y^-, y^+)$ is empty or every element of $P(y^-,y^+)$ can be parametrized to be a $(a, K)$-quasi-geodesic. 

Fixing a basepoint $o \in X$, define the map $D: \partial X \times \partial X \rightarrow [0,+\infty]$ by 
$$
D(y^-,y^+) = \sup_{\sigma \in P(y^-,y^+)} \dist_X(o, \sigma) 
$$
(when $P(y^-,y^+) = \emptyset$, we define $D (y^-,y^+) = +\infty$).

\subsection{Random walks} Suppose $\Gsf$, $X$, $o \in X$, $P : \partial X \times \partial X \rightarrow \mathscr{P}(\QG(X))$, and $D :~\partial X \times \partial X \to \Rb$ are as in the previous section.

Let $\mathsf{m}$ be a probability measure on $\Gsf$. We consider the product space $(\Gsf^{\Zb}, \mathsf{m}^{\Zb})$ and denote each of its elements by $\mathbf{g} := (\dots, g_{-1}, g_0, g_1, g_2, \dots)$.
We often use the shift map $S : \Gsf^{\Zb} \to \Gsf^{\Zb}$, which is defined by $S((g_n)) = (g_{n+1})$. More precisely, for $\mathbf{g} = (g_n) \in \Gsf^{\Zb}$, the $n$-th component of $S(\mathbf{g})$ is $g_{n+1}$. The shift map preserves the measure $\mathsf{m}^{\Zb}$, and moreover is ergodic with respect to $\mathsf{m}^{\Zb}$.

We call $\mathsf{m}$ \emph{well-behaved with respect to $\overline{X}$ and $P$} if the following holds.
\begin{enumerate}[label=(W\arabic*)]
\item\label{item:limits exist} For $\mathsf{m}^{\Zb}$-a.e. $\mathbf{g} \in \Gsf^{\Zb}$, the limits 
$$
\zeta( \mathbf{g}) : =  \lim_{n \to + \infty} g_1 \cdots g_n o \in \partial X \quad \text{and} \quad \hat{\zeta}( \mathbf{g}) : =  \lim_{n \to + \infty} g_0^{-1} \cdots g_{-n}^{-1} o \in \partial X$$
exist. Let $\nu := \zeta_* \mathsf{m}^{\Zb}$ and $\hat \nu := \hat \zeta_* \mathsf{m}^{\Zb}$.

 \item\label{item:non-atomic} $\hat \nu$ is non-atomic.
\item\label{item:finite D} $D$ is finite $\hat \nu \otimes \nu$-a.e.
\item\label{item:bounded convergence} For $\hat \nu \otimes \nu$-a.e. $(y^-, y^+)$ and for every  parametrization $\sigma : \Rb \rightarrow X$ of an element of $P(y^- ,y^+)$,  
if $\{x_n \} \subset X$ and $\sup_{n \geq 0} \dist_X(x_n, \sigma(t_n)) < +\infty$ for some sequence $t_n \rightarrow \pm \infty$, then $x_n \rightarrow y^\pm$. 

\item\label{item:asymptotic} There exists $\kappa > 0$ such that for $\nu$-a.e. $y^+ \in \partial X$, if $y^-_1, y^-_2 \in \partial X \smallsetminus \{ y^+\}$ and $\sigma_j : \Rb \rightarrow X$  is a parametrization of an element of $P (y^-_j ,y^+)$ for $j=1,2$, then for some $t_1,t_2 \in \Rb$, 
$$
\sigma_1([t_1, +\infty)) \subset \Nc_\kappa\big(\sigma_2([t_2, +\infty)) \big) \quad \text{and} \quad \sigma_2([t_2, +\infty)) \subset \Nc_\kappa\big(\sigma_1([t_1, +\infty)) \big)
$$
where $\Nc_{\kappa}$ denotes the $\kappa$-neighborhood in $X$.

\end{enumerate}

Note that the two random variables $\zeta(\mathbf{g})$ and $\hat \zeta(\mathbf{g})$ are independent. We often consider the space of one-sided sequences $(\Gsf^{\Nb}, \mathsf{m}^{\Nb})$ and also denote each of its elements by $\mathbf{g} := (g_1, g_2, \dots)$. Then the $\mathsf{m}^{\Zb}$-a.e. defined map $\zeta : \Gsf^{\Zb} \to \partial X$ factors through $(\Gsf^{\Nb}, \mathsf{m}^{\Nb})$, i.e., for the projection $\Gsf^{\Zb} \to \Gsf^{\Nb}$, 
$$
(\dots, g_{-1}, g_0, g_1, g_2, \dots) \mapsto (g_1, g_2, \dots),
$$
 we have the following commutative diagram.
$$\begin{tikzcd}
\Gsf^{\Zb} \arrow{dr}{\zeta} \arrow{d} & \\
\Gsf^{\Nb} \arrow[dashed]{r} & \partial X
\end{tikzcd}
$$ 
Abusing notations, we denote the above measurable map $\Gsf^{\Nb} \to \partial X$ by $\zeta$, which is $\mathsf{m}^{\Nb}$-a.e. defined and satisfies $\nu = \zeta_* \mathsf{m}^{\Nb}$.

\subsection{Main result} We continue to suppose $\Gsf$, $X$, $o \in X$, and $P : \partial X \times \partial X \rightarrow \mathscr{P}(\QG(X))$ are as in Section~\ref{sec:standing assumptions} and  $\mathsf{m}$ is a probability measure on $\Gsf$. 

Recall that the \emph{first moment} of $\mathsf{m}$ for $\dist_X$ is 
$$
\sum_{g \in \Gsf} \dist_X(o, g o) \mathsf{m}(g) \in [0, + \infty].
$$
By Kingman's subadditive ergodic theorem, if $\mathsf{m}$ has finite first moment, then there exists $\ell(\mathsf{m}) \in [0, + \infty)$ so that for $\mathsf{m}^{\Zb}$-a.e. $\mathbf{g} = (g_n) \in \Gsf^{\Zb}$, 
$$
\ell(\mathsf{m}) = \lim_{n \to + \infty} \frac{1}{n} \dist_X(o, g_1 \cdots g_n o).
$$
The quantity $\ell(\mathsf{m})$ is called the \emph{linear drift} of $\mathsf{m}$.

The $\Gsf$-action on $X$ is called \emph{metrically proper} if for any bounded set $B \subset X$, the set $\{ g \in \Gsf : g B \cap B \neq \emptyset  \}$ is finite. 
Our main theorem of this paper is as follows.

\begin{theorem}[see Theorem~\ref{thm:comm} below] \label{thm:main universal}
Suppose the $\Gsf$-action on $X$ is metrically proper and $\mathsf{m}_1$, $\mathsf{m}_2$ are probability measures on $\Gsf$ which  have finite first moments for $\dist_X$, positive linear drifts, and are well-behaved with respect to $\overline{X}$ and $P$.

If their forward hitting measures $\zeta_* \mathsf{m}_1^{\Zb}$ and $\zeta_* \mathsf{m}_2^{\Zb}$ on $\partial X$ are not singular, then the subgroups
$$
\langle \supp \mathsf{m}_1 \rangle \quad \text{and} \quad \langle \supp \mathsf{m}_2 \rangle 
$$
generated by their supports are commensurable.
\end{theorem}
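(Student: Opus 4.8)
The plan is to deduce commensurability from the coincidence of boundary behaviour along random trajectories. Write $\Hsf_j := \langle \supp \mathsf{m}_j \rangle$. The strategy is: (1) show each random walk tracks quasi-geodesics in $P$, so that a typical bi-infinite sample path fellow-travels a quasi-geodesic joining $\hat\zeta(\mathbf{g})$ to $\zeta(\mathbf{g})$ within bounded Hausdorff distance; (2) use non-singularity of $\nu_1 = \zeta_*\mathsf{m}_1^{\Zb}$ and $\nu_2 = \zeta_*\mathsf{m}_2^{\Zb}$ to produce a positive-measure set of common forward endpoints, and hence (by property \ref{item:asymptotic}) a positive-measure set of pairs of quasi-geodesics, one from each $P$, that eventually $\kappa$-fellow-travel; (3) convert this geometric coincidence into an algebraic commensurability statement by a counting/pigeonhole argument built on metric properness of the $\Gsf$-action.

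For step (1) I would invoke the results announced for Sections~\ref{sec:RW track QG}--\ref{sec:QG track RW}: since $\mathsf{m}_j$ has finite first moment and positive linear drift $\ell(\mathsf{m}_j)>0$, for $\mathsf{m}_j^{\Zb}$-a.e.\ $\mathbf{g}$ the forward and backward limits $\zeta(\mathbf{g}),\hat\zeta(\mathbf{g})$ exist, $D(\hat\zeta(\mathbf{g}),\zeta(\mathbf{g}))<\infty$ by \ref{item:finite D}, and the sample path $\{\omega_n o\}$ stays within bounded distance of some parametrized $\sigma_{\mathbf{g}}\in P(\hat\zeta(\mathbf{g}),\zeta(\mathbf{g}))$, with the forward/backward ends of $\sigma_{\mathbf{g}}$ landing at $\zeta(\mathbf{g})$, $\hat\zeta(\mathbf{g})$ respectively by \ref{item:bounded convergence}. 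The key quantitative input is that, after discarding a set of small measure, one can take the tracking constant and the bound $D(\hat\zeta(\mathbf{g}),\zeta(\mathbf{g}))$ uniform over a set $\Omega_j \subset \Gsf^{\Zb}$ of measure close to $1$.

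For step (2): by hypothesis $\nu_1 \not\perp \nu_2$, so there is a Borel set $E \subset \partial X$ with $\nu_1(E)>0$ and $\nu_2(E)>0$; intersecting with the conull set from \ref{item:asymptotic} we may assume the $\kappa$-fellow-travelling conclusion holds for every $y^+ \in E$. Now run both random walks and condition on the forward endpoint lying in $E$: for a positive-$\mathsf{m}_1^{\Zb}$-measure set of $\mathbf{g}$ and a positive-$\mathsf{m}_2^{\Zb}$-measure set of $\mathbf{h}$ we get quasi-geodesics $\sigma_{\mathbf{g}}\in P(\cdot,\zeta(\mathbf{g}))$ and $\sigma_{\mathbf{h}}\in P(\cdot,\zeta(\mathbf{h}))$ with $\zeta(\mathbf{g}),\zeta(\mathbf{h})\in E$, whose forward rays are mutually $\kappa$-close; combined with (1) this means the $\Hsf_1$-path $\{\omega_n^{(1)}o\}$ and the $\Hsf_2$-path $\{\omega_n^{(2)}o\}$ eventually stay within a uniformly bounded neighbourhood of a common quasi-geodesic ray. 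The main obstacle — and the technical heart — is step (3): I want to manufacture from this geometric fellow-travelling an actual element of the commensurator, or directly show $[\Hsf_1 : \Hsf_1 \cap \Hsf_2] < \infty$. The idea is that along the common ray, the $\Hsf_1$-orbit and $\Hsf_2$-orbit are both coarsely dense (positive drift, quasi-geodesic tracking), so there is a uniform $R$ such that for every point $p$ on the ray both orbits meet $B(p,R)$; metric properness then bounds, for each $h \in \Hsf_1$ hitting $B(p,R)$, the number of $\Hsf_2$-elements in $B(p,R)$, and by shuttling along the ray and using the group structure of the stabilizers one extracts a finite-index relation between $\Hsf_1$ and $\Hsf_2$ — essentially recovering the Susskind--Swarup argument with limit sets replaced by fellow-travelling quasi-geodesic rays. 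Making the "shuttling" precise, i.e.\ upgrading a coincidence of orbits near a single ray into a coincidence of orbits near the full limit set and hence commensurability, is where care is needed; I expect this to be carried out in Section~\ref{sec:non-sing} via an ergodicity argument (the set of common endpoints is $\Hsf_1\cap\Hsf_2$-invariant and, once nonempty, forces the limit sets of $\Hsf_1$ and $\Hsf_2$ to coincide up to the action of a common finite-index subgroup).
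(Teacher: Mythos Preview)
Your outline for steps (1) and (2) is broadly in line with the paper, though the paper proves (and only needs) density-type tracking statements --- for any $\epsilon>0$ there is $R$ so that a.e.\ the walk is within $R$ of the forward quasi-geodesic ray for a $(1-\epsilon)$-proportion of times, and conversely (Section~\ref{sec:QG track RW}) the ray spends a $(1-\epsilon)$-proportion of its length within $R$ of the walk's orbit --- rather than a uniform bounded-Hausdorff-distance statement. Combining these two tracking results with \ref{item:asymptotic} and the non-singularity exactly as you suggest, the paper obtains (Lemma~5.2) that on a set $E$ of positive $\mathsf m_1^{\Nb}$-measure the $\mathsf m_1$-walk spends more than half its time within $R$ of $\Hsf_2 o$.

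The genuine gap is step (3). Your proposed ``shuttling/pigeonhole'' and ``ergodicity of the set of common endpoints'' arguments are too vague to work: knowing that two orbits coarsely coexist along a single quasi-geodesic ray does not by itself yield a finite-index relation, and there is no ergodic action in sight under which the set of common endpoints is invariant. The paper's mechanism is entirely different and is the real point of Section~\ref{sec:non-sing}. From Lemma~5.2 and metric properness one gets a finite set $F\subset\Gsf$ with
\[
\liminf_{N\to\infty}\frac{1}{N}\sum_{n=1}^N \mathsf m_1^{*n}(\Hsf_2 F)>0.
\]
Pushing forward to the discrete coset space $\Hsf_2\backslash\Gsf$, the Ces\`aro averages $\frac{1}{N}\sum_{n=1}^N(\pi_*\mathsf m_1)*\mathsf m_1^{*(n-1)}$ have mass bounded below on the finite set $\Hsf_2 F$, so any weak-$*$ limit $\mu$ is a nonzero finite $\mathsf m_1$-stationary measure on $\Hsf_2\backslash\Gsf$, supported on $\Hsf_2\backslash\Hsf_2\Hsf_1$. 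Now comes the algebraic punchline: the set of cosets where $\mu$ attains its maximum is finite (since $\mu$ is finite) and, by stationarity $\mu=\mu*\mathsf m_1$, stable under right multiplication by $\supp\mathsf m_1$, hence by $\Hsf_1$. This forces $\#(\Hsf_2\backslash\Hsf_2\Hsf_1)<\infty$, i.e.\ $[\Hsf_1:\Hsf_1\cap\Hsf_2]<\infty$; symmetry gives commensurability. This stationary-measure-on-cosets plus maximum-principle argument is the missing idea in your proposal.
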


\subsection{Examples}\label{subsec:examples}

We now show that random walks on certain classes of metric spaces are well-behaved, with respect to natural bordifications, and have positive linear drifts. As a result, all the statements in the introduction will follow from Theorem~\ref{thm:main universal}.

\subsubsection{Isometry groups of separable Gromov hyperbolic spaces}

For $\delta \ge 0$, a geodesic metric space $(X, \dist_X)$ is called \emph{$\delta$-hyperbolic} if any geodesic triangle in $X$ is $\delta$-thin, i.e., for any geodesic triangle in $X$, one side is contained in the $\delta$-neighborhood of the union of the other two sides. The metric space $(X, \dist_X)$ is \emph{Gromov hyperbolic} if it is $\delta$-hyperbolic for some $\delta \ge 0$.

Let $(X, \dist_X)$ be a $\delta$-hyperbolic space.
For $o, x, y \in X$, the Gromov product of $x$ and $y$ with respect to $o$ is 
$$
\langle x, y \rangle_o := \frac{1}{2} \left( \dist_X(o, x) + \dist_X(o, y) - \dist_X(x, y) \right).
$$
This quantity measures the distance between $o$ and any geodesic segment between $x$ and $y$, up to an additive error depending only on $\delta$.

The \emph{Gromov boundary} of $X$ is defined as the space of equivalence classes of certain sequences in $X$:
$$
\partial X := \{ \{ x_n \} \subset X : \liminf_{n, m \to + \infty} \langle x_n, x_m \rangle_o = + \infty \} / \sim
$$
where $\{ x_n \} \sim \{ y_n \}$ if $\liminf_{n, m \to + \infty} \langle x_n, y_m \rangle_o = + \infty$. There are natural topologies on $\partial X$ and $X \cup \partial X$ so that if $X$ is separable, then $\overline{X} := X \cup \partial X$ is a bordification of $X$ 
(recall that we assume that a bordification is second countable). 
Further, if $X$ is proper, then $\partial X$ and $\overline{X}$ are compact.  See \cite{BH_book}, \cite{KB_boundaries}, \cite{DSU_hyperbolic} for more details.

There exists $K = K(\delta) \ge 0$ such that for any distinct $y^{\pm} \in \partial X$, there exists a $(1, K)$-quasi-geodesic $\sigma : \R \to X$ such that $\lim_{t \to \pm \infty} \sigma (t) = y^{\pm}$, see \cite[Remark 2.16]{KB_boundaries}. Hence with this $K$, we define the map $P: \partial X \times \partial X \to \mathscr{P}(\QG(X))$ as follows: for $(y^-, y^+) \in \partial X \times \partial X$,
$$
P(y^-, y^+) := \left\{ \sigma \in \QG(X) : \quad 
\begin{matrix}
\exists \text{ parametrization } \sigma : \Rb \to X \text{ s.t.}\\
\sigma \text{ is a } (1, K)\text{-quasi-geodesic and }
\displaystyle \lim_{t \to \pm \infty} \sigma(t) = y^{\pm}
\end{matrix}
\right\}.
$$
Then by the choice of $K \ge 0$, we have $P(y^-, y^+) \neq \emptyset$ if and only if $y^- \neq y^+$. Moreover, it is clear that $P$ is equivariant under the action of the isometry group of $X$.

Isometries of $X$ are classified into the following three types. For $g \in \Isom(X)$, either 
\begin{itemize}
   \item $g$ is \emph{elliptic}, i.e., $g$ has a bounded orbit in $X$,
   \item $g$ is \emph{parabolic}, i.e., $g$ is not elliptic and has a unique fixed point in $\partial X$, or 
   \item $g$ is \emph{loxodromic}, i.e., $g$ is not elliptic and has two distinct fixed points in $\partial X$, one is attracting and the other is repelling.
\end{itemize}
Two loxodromic elements $g, h \in \Isom(X)$ are \emph{independent} if they have disjoint sets of fixed points. A subgroup of $\Isom(X)$ is called \emph{non-elementary} if it contains two independent loxodromic isometries.

From now on, we further assume that $(X, \dist_X)$ is separable, but it may not be proper. 
Let $\Gsf < \Isom(X)$ be a non-elementary subgroup whose action on $X$ is metrically proper and suppose $\mathsf{m}$ is a probability measure on $\Gsf$ such that $\Gsf$ is generated by the support of $\mathsf{m}$ as a group. In this case, since the $\Gsf$-action on $X$ is metrically proper, the semigroup $\langle \supp \mathsf{m} \rangle_+$ generated by the support of $\mathsf{m}$ has an unbounded orbit. Hence, the non-elementary hypothesis on $\Gsf$ implies that $\langle \supp \mathsf{m} \rangle_+$ contains two independent loxodromic isometries \cite[ Theorem 6.2.3, Proposition 6.2.14]{DSU_hyperbolic}.

We now show that $\mathsf{m}$ is well-behaved with respect to $\overline{X}$ and $P$.  Since $\langle \supp \mathsf{m} \rangle_+$ contains two independent loxodromic isometries, a result of Maher--Tiozzo~\cite[Theorem 1.1]{MaherTiozzo2018} implies Property \ref{item:limits exist} and Property \ref{item:non-atomic}. While they further assumed  $\Gsf = \langle \supp \mathsf{m} \rangle_+$ throughout the paper, the proof of this statement works without the assumption that $\Gsf$ is generated by $\supp \mathsf{m}$ as a semigroup, as long as the semigroup $\langle \supp \mathsf{m} \rangle_+$ contains two independent loxodromic isometries. Then the non-atomicity in Property \ref{item:non-atomic} and the choice of $K \ge 0$ imply Property \ref{item:finite D}. Property \ref{item:bounded convergence} and Property \ref{item:asymptotic} are consequences of the Morse Lemma. 
 Therefore, $\mathsf{m}$ is well-behaved with respect to $\overline{X}$ and $P$.

Finally, when $\mathsf{m}$ has finite first moment, Gou\"ezel proved  $\ell(\mathsf{m}) > 0$ \cite[Theorem 1.1]{Gouezel_exp}. This was shown in \cite[Theorem 1.2]{MaherTiozzo2018} when $\Gsf = \langle \supp \mathsf{m} \rangle_+$. We note that finite first moment was only to ensure that $\ell(\mathsf{m})$ is well-defined. Replacing $\lim$ with $\liminf$ in defining $\ell(\mathsf{m})$, the positivity does not require any moment condition. 

Therefore, Theorem \ref{thm:main universal} applies in this setting 
and establishes Theorem~\ref{thm:GH} in the separable case. This implies the non-separable case, see Remark~\ref{rmk:separable}.

\subsubsection{Mapping class groups and Teichm\"uller spaces}

Let $S$ be a closed connected orientable surface of genus at least two so that $S$ admits a complete hyperbolic structure.  The mapping class group $\Mod(S)$ is the group of isotopy classes of orientation-preserving homeomorphisms of $S$, and the Nielsen--Thurston classification says that there are three categories of its elements (\cite{Nielsen_classification}, \cite{Thurston_geometry_dynamics}). That is, for $g \in \Mod(S)$,
\begin{itemize}
   \item $g$ is \emph{periodic}, i.e., $g$ has finite order.
   \item $g$ is \emph{reducible}, i.e., there exists a multicurve on $S$ invariant under $g$, up to isotopy.
   \item $g$ is \emph{pseudo-Anosov}, i.e., $g$ has a representative that preserves a pair of transverse (singular) measured foliations on $S$, stretching one and contracting the other. 
\end{itemize}
Note that a single mapping class can be both periodic and reducible, while pseudo-Anosov mapping classes are neither periodic nor reducible.

Let $\Tc(S)$ denote the  Teichm\"uller space of all marked hyperbolic structures on $S$ endowed with the Teichm\"uller metric. This is a proper and geodesic metric space. Moreover, the natural action of $\Mod(S)$ on $\Tc(S)$ is proper and by isometries. Let $\PMF$ denote the projective space of measured foliations on $S$. Thurston defined a compact topology on $\Tc(S) \cup \PMF$ to which the $\Mod(S)$-action on $\Tc(S)$ continuously extends, which is now referred to as Thurston's compactification \cite{Thurston_geometry_dynamics}. In particular, Thurston's compactification is a bordification of $\Tc(S)$.

In terms of the $\Mod(S)$-action on Thurston's compactification, the Nielsen--Thurston classification of mapping classes resembles the classification of isometries of Gromov hyperoblic spaces. Indeed, a transverse pair of measured foliations on $S$ invariant under a pseudo-Anosov mapping class is unique up to scaling, and hence it gives rise to a pair of points in $\PMF$ fixed by the pseudo-Anosov mapping class. Moreover, the stretching and contracting of the measured foliations imply the attracting and repelling of the fixed points in $\PMF$. This demonstrates that pseudo-Anosov mapping classes resemble loxodromic isometries on Gromov hyperbolic spaces. In this manner, we call two pseudo-Anosov mapping classes \emph{independent}, if they have  disjoint sets of fixed points in $\PMF$. Then a subgroup $\Gsf < \Mod(S)$ is \emph{non-elementary} if $\Gsf$ contains two independent pseudo-Anosov mapping classes. 

For the rest of this section, let $\Gsf < \Mod(S)$ be non-elementary, and suppose $\mathsf{m}$ is a probability measure on $\Gsf$ whose support generates $\Gsf$ as a group. 

We now prove that $\mathsf{m}$ is well-behaved with respect to Thurston's compactification $\Tc(S) \cup \PMF$ and appropriately defined map $P$. First, Property \ref{item:limits exist} and Property \ref{item:non-atomic} were proved by Kaimanovich--Masur \cite{KM_MCG}.

To define the map $P$, we note that Kaimanovich--Masur further showed that the hitting measures $\nu$ and $\hat \nu$ are supported on the subset $\UE \subset \PMF$ of uniquely ergodic measured foliations. More precisely, a measured foliation $\Fc$ on $S$ is \emph{uniquely ergodic} if it intersects all simple closed curves on $S$ and the only topologically equivalent measured foliations to $\Fc$ are multiples of $\Fc$. We also call its projective class uniquely ergodic, and $\UE \subset \PMF$ is the set of those projective classes. Since any two distinct measured foliations in $\UE$ are transverse, there exists a (unique) Teichm\"uller geodesic in $\Tc(S)$ having them as endpoints. Therefore, we can define the map $P : \PMF \times \PMF \to \mathscr{P}(\QG(\Tc(S)))$ by 
$$
P(y^-, y^+) = \{ \text{the Teichm\"uller geodesic from } y^- \text{ to } y^{+} \} \quad \text{if } y^{\pm} \in \UE \text{ are distinct,}
$$ and $P(y^-, y^+) = \emptyset$ otherwise. Then the map $P$ is $\Mod(S)$-equivariant.

Now based on the fact that $\nu$ and $\hat \nu$ are supported on $\UE$ and are non-atomic, Property \ref{item:finite D} follows.  Moreover, Property \ref{item:bounded convergence} follows from  \cite[Lemma 1.4.2]{KM_MCG}, since $\nu$ and $\hat \nu$ are supported on $\UE$. 
Property \ref{item:asymptotic} follows from the work of Masur \cite{Masur_unique}, together with that $\nu$ is supported on $\UE$. Therefore, $\mathsf{m}$ is well-behaved with respect to Thurston's compactification $\Tc(S) \cup \PMF$ and  $P$.

Finally, Tiozzo's sublinear geodesic tracking \cite{Tiozzo_sublinear} implies that if $\mathsf{m}$ has finite first moment, then $\ell(\mathsf{m}) > 0$. Hence, Theorem \ref{thm:main universal} applies.

\section{Random walks track quasi-geodesics}  \label{sec:RW track QG}

We continue to suppose $\Gsf$, $\overline{X}$, $o \in X$, $P : \partial X \times \partial X \rightarrow \mathscr{P}(\QG(X))$, and $D : \partial X \times \partial X \to \Rb$ are as in Section~\ref{sec:standing assumptions}. Let $a \geq 1$ and $K \geq 0$ be the constants in the definition of $P$.

In this section we observe that a generic random walk spends most of its time in a neighborhood of a quasi-geodesic ray. Analogous statements for different settings were proved in \cite[Theorem A bis]{Benard_RW}, \cite[Proof of Theroem 2.2.4]{KM_MCG}, for instance.

\begin{theorem} \label{thm:RW track G}
    Suppose  $\mathsf{m}$ is well-behaved with respect to $\overline{X}$ and $P$. Assume
    \begin{equation}\label{eqn:escape to infinity as}
    \lim_{n \rightarrow + \infty} \dist_X(g_1\cdots g_n o, o) =+\infty
    \end{equation}
    for $\mathsf{m}^{\Zb}$-a.e. $\mathbf{g} = (g_n) \in \Gsf^{\Zb}$ (e.g. $\mathsf{m}$  has finite first moment and positive linear drift).

For every $\epsilon > 0$ there exists $R > 0$ such that: For $\mathsf{m}^{\Zb}$-a.e. $\mathbf{g} = (g_n) \in \Gsf^{\Zb}$, if $\sigma : \Rb \rightarrow X$ is a $(a, K)$-quasi-geodesic parametrizing an element of $P(\hat{\zeta}(\mathbf{g}) ,\zeta( \mathbf{g}) )$, then 
$$
 \liminf_{N \to + \infty} \frac{1}{N} \# \left\{ 1 \le n \le N : \dist_X(g_1\cdots g_n o , \sigma|_{[0, +\infty)}) \leq R \right\} > 1-\epsilon .
   $$
\end{theorem}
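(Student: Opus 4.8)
The plan is to combine the ergodic theorem with the "bounded convergence" property \ref{item:bounded convergence} to show that, after discarding a vanishing proportion of indices, the orbit points $g_1 \cdots g_n o$ lie uniformly close to the quasi-geodesic $\sigma$ representing $P(\hat\zeta(\mathbf{g}), \zeta(\mathbf{g}))$. The crucial observation is that the "distance to $\sigma$" is, along the shifted trajectories, comparable to a quantity whose generic boundedness is a consequence of \ref{item:bounded convergence} applied at the forward and backward boundary points.

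First I would set up a cocycle-type function on $\Gsf^{\Zb}$. For $\mathbf{g} \in \Gsf^{\Zb}$, pick (measurably) a parametrization $\sigma_{\mathbf{g}} : \Rb \to X$ of an element of $P(\hat\zeta(\mathbf{g}), \zeta(\mathbf{g}))$ — this is possible for a.e. $\mathbf{g}$ since $D$ is finite a.e. by \ref{item:finite D}, so $P(\hat\zeta(\mathbf{g}), \zeta(\mathbf{g})) \neq \emptyset$. Define $F(\mathbf{g}) := \dist_X(o, \sigma_{\mathbf{g}}|_{[0,+\infty)})$, a finite a.e.-defined measurable function; by \ref{item:finite D} we may even take $F \leq D(\hat\zeta(\mathbf{g}), \zeta(\mathbf{g})) < +\infty$ a.e. Now observe the key point: $g_1 \cdots g_n o$ is close to $\sigma_{\mathbf{g}}|_{[0,+\infty)}$ precisely when $o$ is close to $(g_1 \cdots g_n)^{-1} \sigma_{\mathbf{g}}|_{[0,+\infty)}$, and by $\Gsf$-equivariance of $P$ (and uniqueness-up-to-$\kappa$-neighborhood from \ref{item:asymptotic}) the set $(g_1 \cdots g_n)^{-1}\sigma_{\mathbf{g}}$ is a quasi-geodesic in $P$ attached to the boundary point $(g_1\cdots g_n)^{-1}\zeta(\mathbf{g})$, which under the shift is exactly $\zeta(S^n \mathbf{g})$. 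Thus, up to the orientation/sub-ray issue and an additive error controlled by $\kappa$, $\dist_X(g_1\cdots g_n o, \sigma_{\mathbf{g}}|_{[0,+\infty)})$ is controlled by $F(S^n \mathbf{g})$ (plus possibly a term measuring how far back $\sigma_{\mathbf{g}}$ already is at "time $n$", which is handled using property \ref{item:bounded convergence} below).

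The heart of the argument, then, is: the function $F$ is finite a.e., hence for every $\epsilon > 0$ there is $R_0$ with $\mathsf{m}^{\Zb}(\{F \leq R_0\}) > 1 - \epsilon/2$; applying Birkhoff's ergodic theorem to the indicator $\mathbf{1}_{\{F \leq R_0\}}$ along the ergodic shift $S$ gives, for a.e. $\mathbf{g}$,
$$
\lim_{N \to +\infty} \frac{1}{N} \#\{1 \leq n \leq N : F(S^n \mathbf{g}) \leq R_0\} = \mathsf{m}^{\Zb}(\{F \leq R_0\}) > 1 - \epsilon/2.
$$
The remaining work is to upgrade "$F(S^n \mathbf{g}) \leq R_0$" to "$\dist_X(g_1\cdots g_n o, \sigma_{\mathbf{g}}|_{[0,+\infty)}) \leq R$" for a uniform $R$. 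This needs two things. (i) One must pass from the ray $\sigma_{S^n\mathbf{g}}$ chosen for the shifted sequence to the translate $(g_1\cdots g_n)^{-1}\sigma_{\mathbf{g}}$: these are two quasi-geodesics in $P$ sharing the forward endpoint $\zeta(S^n\mathbf{g})$, so by \ref{item:asymptotic} their forward sub-rays are within Hausdorff distance $\kappa$ of each other, costing only an additive $\kappa$. (ii) One must verify that, for large $n$, the relevant nearest point on $\sigma_{\mathbf{g}}$ to $g_1\cdots g_n o$ actually lies on the forward ray $\sigma_{\mathbf{g}}|_{[0,+\infty)}$ rather than on the negative part: this is where \eqref{eqn:escape to infinity as} and \ref{item:bounded convergence} enter. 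Indeed, if $g_1\cdots g_n o$ were within bounded distance of $\sigma_{\mathbf{g}}(t_n)$ with $t_n \to -\infty$ along a subsequence, then \ref{item:bounded convergence} forces $g_1\cdots g_n o \to \hat\zeta(\mathbf{g})$, contradicting $g_1 \cdots g_n o \to \zeta(\mathbf{g}) \neq \hat\zeta(\mathbf{g})$ (the endpoints are distinct a.e. since $\nu, \hat\nu$ are independent and non-atomic). Hence for $n$ large the nearest-point projection lands on the forward ray, and combining with (i) we get $\dist_X(g_1\cdots g_n o, \sigma_{\mathbf{g}}|_{[0,+\infty)}) \leq R_0 + \kappa + O(\delta\text{-type error})$; set $R$ to be this bound. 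Choosing $R_0$ as above with room to spare absorbs the finitely many bad initial indices into the $\epsilon/2$ slack, giving the claimed liminf $> 1 - \epsilon$.

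The main obstacle I expect is step (ii) above — carefully controlling the "forward vs. backward ray" dichotomy and making the comparison between $\dist_X(g_1\cdots g_n o, \sigma_{\mathbf{g}}|_{[0,+\infty)})$ and $F(S^n\mathbf{g})$ genuinely uniform in $n$. The subtlety is that \ref{item:bounded convergence} is an asymptotic (qualitative) statement, so extracting from it a \emph{uniform} radius $R$ valid for all large $n$ requires an argument by contradiction: if no such $R$ worked, one would extract a subsequence $n_k$ along which $\dist_X(g_1\cdots g_{n_k} o, \sigma_{\mathbf{g}}|_{[0,+\infty)}) \to \infty$ while $F(S^{n_k}\mathbf{g})$ stays bounded, and one must derive a contradiction using the equivariance of $P$, property \ref{item:asymptotic}, and the fact that $g_1\cdots g_{n_k} o \to \zeta(\mathbf{g})$. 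Making this extraction clean — and in particular ensuring the measurable choice of $\sigma_{\mathbf{g}}$ does not interfere — is the technical core; everything else is Birkhoff's theorem plus bookkeeping with additive quasi-geodesic constants.
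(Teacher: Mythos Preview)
Your overall strategy---apply Birkhoff's ergodic theorem to a ``distance-to-quasi-geodesic'' function, use $\Gsf$-equivariance of $P$ together with the shift relations $\zeta(S^n\mathbf{g}) = (g_1\cdots g_n)^{-1}\zeta(\mathbf{g})$ and $\hat\zeta(S^n\mathbf{g}) = (g_1\cdots g_n)^{-1}\hat\zeta(\mathbf{g})$, and then upgrade from the full quasi-geodesic to the forward ray via \ref{item:bounded convergence}---is exactly the paper's argument. Your step (ii) is correct and matches the paper's endgame.

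The gap is in step (i), and it stems from your choice of $F(\mathbf{g}) = \dist_X(o, \sigma_{\mathbf{g}}|_{[0,+\infty)})$ for a \emph{measurably selected} $\sigma_{\mathbf{g}}$. To compare $F(S^n\mathbf{g})$ with $\dist_X(g_1\cdots g_n o, \sigma_{\mathbf{g}}|_{[0,+\infty)})$ you must compare the two quasi-geodesics $\sigma_{S^n\mathbf{g}}$ and $(g_1\cdots g_n)^{-1}\sigma_{\mathbf{g}}$, both in $P(\hat\zeta(S^n\mathbf{g}), \zeta(S^n\mathbf{g}))$. You invoke \ref{item:asymptotic}, claiming ``an additive $\kappa$'', but \ref{item:asymptotic} only asserts that \emph{some} forward sub-rays $\sigma_1|_{[t_1,+\infty)}$ and $\sigma_2|_{[t_2,+\infty)}$ are $\kappa$-close, with no control on $t_1, t_2$. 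If the point of $\sigma_{S^n\mathbf{g}}|_{[0,+\infty)}$ nearest to $o$ lies at a parameter below $t_2$, you learn nothing about the distance from $o$ to $(g_1\cdots g_n)^{-1}\sigma_{\mathbf{g}}$. Since $t_1, t_2$ may depend on $n$, no uniform $R$ emerges, and your contradiction sketch does not close this: you would only conclude that $g_1\cdots g_{n_k} o$ lies within $R_0$ of \emph{some} (varying) element of $P(\hat\zeta(\mathbf{g}), \zeta(\mathbf{g}))$, not of the fixed $\sigma_{\mathbf{g}}$. Note also that the theorem requires the conclusion for \emph{every} $\sigma$ in $P(\hat\zeta(\mathbf{g}), \zeta(\mathbf{g}))$, not just the selected one.

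The fix is simple and is exactly what the paper does: drop the measurable selection and the forward ray, and use instead the function $D(\hat\zeta(\mathbf{g}), \zeta(\mathbf{g})) = \sup_{\sigma \in P(\hat\zeta(\mathbf{g}), \zeta(\mathbf{g}))} \dist_X(o, \sigma)$, which is finite a.e.\ by \ref{item:finite D}. Equivariance of $P$ then gives the \emph{exact} identity
\[
D\bigl(\hat\zeta(S^n\mathbf{g}), \zeta(S^n\mathbf{g})\bigr) \;=\; \sup_{\sigma \in P(\hat\zeta(\mathbf{g}), \zeta(\mathbf{g}))} \dist_X(g_1\cdots g_n o, \sigma),
\]
so Birkhoff applied to $\mathbf{1}_{\{D \le R\}}$ yields, for a.e.\ $\mathbf{g}$ and \emph{every} $\sigma \in P(\hat\zeta(\mathbf{g}), \zeta(\mathbf{g}))$, that $\dist_X(g_1\cdots g_n o, \sigma) \le R$ for a density-$(1-\epsilon)$ set of $n$. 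Property \ref{item:asymptotic} is not needed at all. Your step (ii) then finishes the proof verbatim.
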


\begin{remark} For the particular examples we consider in Section~\ref{subsec:examples}, it suffices to consider the case when $a=1$. \end{remark}

\begin{proof} 
By Property \ref{item:limits exist} and Property \ref{item:finite D}, for $\mathsf{m}^{\Zb}$-a.e. $\mathbf{g} = (g_n) \in \Gsf^{\Zb}$, the limits 
$$
\zeta(\mathbf{g}) = \lim_{n \to + \infty} g_1 \dots g_n o  \quad \text{and} \quad \hat \zeta(\mathbf{g}) := \lim_{n \to + \infty} g_0^{-1} g_{-1}^{-1} \dots g_{-n}^{-1} o 
$$
 exist in $\partial X$ and $D(\hat{\zeta} (\mathbf{g}), \zeta (\mathbf{g}))$ is finite. In particular, 
   $$
\lim_{R \rightarrow +\infty}    \mathsf{m}^{\Zb} \left( \left\{ \mathbf{g} \in \Gsf^{\Zb} : D (\hat{\zeta} (\mathbf{g}), \zeta (\mathbf{g}))  \le R \right\} \right)=1.
   $$
   So we can fix $R > 0$ where the set 
   $$
   A_R : =\left\{ \mathbf{g} \in \Gsf^{\Zb} : D (\hat{\zeta} (\mathbf{g}), \zeta (\mathbf{g}))  \le R \right\} 
   $$
   satisfies $\mathsf{m}^{\Zb}(A_R) > 1-\epsilon$. 
   
Note that for the shift map $S : \Gsf^{\Zb} \to \Gsf^{\Zb}$,
     $$
     \hat \zeta(S\mathbf{g}) = g_1^{-1} \hat \zeta (\mathbf{g}) \quad \text{and}  \quad \zeta (S \mathbf{g}) = g_1^{-1} \zeta (\mathbf{g}) \quad      \mathsf{m}^{\Zb}\text{-a.e.}
     $$
Since $P$ is $\Gsf$-equivariant,
   $$
 D\left( \hat \zeta (S^n \mathbf{g}), \zeta (S^n \mathbf{g}) \right) = \sup_{\sigma \in P(\hat{\zeta} (\mathbf{g}), \zeta (\mathbf{g}))} \dist_X( g_1 \cdots g_n o, \sigma).
   $$
   Since $S$ preserves $\mathsf{m}^{\Zb}$ and is ergodic, it follows from the Birkhoff ergodic theorem that for $\mathsf{m}^{\Zb}$-a.e. $\mathbf{g} = (g_n) \in \Gsf^{\Zb}$,
   \begin{align}
   \lim_{N \to + \infty}  & \frac{1}{N} \#  \left\{1 \le n \le N  :  \sup_{\sigma \in P(\hat{\zeta} (\mathbf{g}), \zeta (\mathbf{g}))} \dist_X( g_1 \cdots g_n o, \sigma) \le R \right\} \nonumber \\
   & = \lim_{N \to + \infty} \frac{1}{N} \sum_{n = 1}^N \mathbf{1}_{A_{R} }(S^n \mathbf{g})  = \mathsf{m}^{\Zb} \left( A_R  \right) > 1-\epsilon.\label{eqn:ae condition in first theorem}
   \end{align}

      Since $\hat \zeta (\mathbf{g})$ and $\zeta (\mathbf{g})$ are independent and $\hat \nu$ is non-atomic by Property \ref{item:non-atomic}, for $\mathsf{m}^{\Zb}$-a.e. $\mathbf{g} = (g_n) \in \Gsf^{\Zb}$ we have 
      \begin{equation}\label{eqn:limits points not equal in pm directions}
      \hat{\zeta} (\mathbf{g}) \neq \zeta (\mathbf{g}).
      \end{equation}
      
Now $\mathsf{m}^{\Zb}$-a.e. $\mathbf{g} = (g_n) \in \Gsf^{\Zb}$ satisfies Equations~\eqref{eqn:escape to infinity as}, ~\eqref{eqn:ae condition in first theorem}, and~\eqref{eqn:limits points not equal in pm directions}. Fix such  $\mathbf{g}$ and then fix a $(a, K)$-quasi-geodesic $\sigma : \Rb \rightarrow X$ parametrizing an element of $P ( \hat{\zeta} (\mathbf{g}) ,\zeta (\mathbf{g}) )$.

      Let 
   \begin{align*}
   I  & : = \left\{ n \in \Nb : \dist_X( g_1 \cdots g_n o, \sigma)\leq R \right\} = \{ n_1 < n_2 < \dots \}. 
   \end{align*}
   For each $n_j \in I$ fix $t_{n_j} \in \Rb$ with 
$$
\dist_X( g_1 \cdots g_{n_j} o, \sigma(t_j))\leq R.
$$
By Equation~\eqref{eqn:ae condition in first theorem},
$$
\liminf_{N \rightarrow + \infty} \frac{1}{N} \#(I \cap [1,N]) > 1-\epsilon, 
$$
so to finish the proof we need to show that $t_j \rightarrow +\infty$. By Equation~\eqref{eqn:escape to infinity as}, we have $\abs{t_j} \rightarrow + \infty$. Since $g_1 \cdots g_{n_j} o \rightarrow \zeta(\mathbf{g}) \in \partial X$ and  $\zeta(\mathbf{g}) \neq \hat\zeta(\mathbf{g})$ by Equation~\eqref{eqn:limits points not equal in pm directions}, Property~\ref{item:bounded convergence} implies that $t_j \rightarrow +\infty$. 
\end{proof}

\section{Quasi-geodesics track random walks} \label{sec:QG track RW}

We continue to suppose $\Gsf$, $\overline{X}$, $o \in X$, and $P : \partial X \times \partial X \rightarrow \mathscr{P}(\QG(X))$ are as in Section~\ref{sec:standing assumptions}. Let $a \geq 1$ and $K \geq 0$ be the constants in the definition of $P$. 

In this section, we prove the complementary statement of Theorem \ref{thm:RW track G} that a generic quasi-geodesic ray spends most of its time in a bounded neighborhood of a random walk.

For $R > 0$, let $\Nc_{R}(S)$ denote the $R$-neighborhood of a subset $S \subset X$.

\begin{theorem} \label{thm:G track RW}
   Suppose $\mathsf{m}$ has finite first moment, positive linear drift, and is well-behaved with respect to $\overline{X}$ and $P$.

   For every $\epsilon > 0$ there exists $R > 0$ such that: For $\mathsf{m}^{\Zb}$-a.e. $\mathbf{g} = (g_n) \in \Gsf^{\Zb}$, if $\sigma : \Rb \to X$ is a $(a, K)$-quasi-geodesic parametrizing an element of $P(\hat \zeta (\mathbf{g}), \zeta(\mathbf{g}))$, then
$$
\liminf_{T \rightarrow +\infty} \frac{1}{T} \abs{ \left\{ t \in [0,T] : \sigma(t) \in \Nc_R\left( \cup_{n \geq 1} g_1 \cdots g_n o\right)\right\} } > 1 - \epsilon .
$$
\end{theorem}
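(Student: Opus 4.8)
The plan is to deduce the statement from its "dual", Theorem~\ref{thm:RW track G}, by running a Birkhoff-type counting argument on the geodesic flow direction (the parameter $t$) rather than on the random-walk index $n$. The idea is that Theorem~\ref{thm:RW track G} already tells us that most of the random-walk points $g_1\cdots g_n o$ lie within a bounded distance $R_0$ of $\sigma|_{[0,+\infty)}$; since $\sigma$ is an $(a,K)$-quasi-geodesic, these points are roughly equispaced along $\sigma$ with gaps comparable to $a\ell(\mathsf m)$, so the set of parameters $t$ that are \emph{not} close to any $g_1\cdots g_n o$ must be small. Concretely, first I would fix $\epsilon>0$, apply Theorem~\ref{thm:RW track G} with a smaller $\epsilon' = \epsilon'(\epsilon)$ to get $R_0>0$ and a full-measure set of $\mathbf g$, and also discard a null set so that the linear drift $\ell(\mathsf m)>0$ holds, the limits $\zeta(\mathbf g)\ne\hat\zeta(\mathbf g)$ exist, and the escape-to-infinity condition~\eqref{eqn:escape to infinity as} holds.

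The key quantitative step is a \emph{progression-monotonicity} estimate along $\sigma$: using that $\sigma$ is an $(a,K)$-quasi-geodesic and the arguments in the proof of Theorem~\ref{thm:RW track G} (the parameters $t_{n_j}$ with $\dist_X(g_1\cdots g_{n_j}o,\sigma(t_{n_j}))\le R_0$ satisfy $t_{n_j}\to+\infty$), I would show that along the index set $I=\{n: \dist_X(g_1\cdots g_n o,\sigma)\le R_0\}=\{n_1<n_2<\cdots\}$ the chosen parameters $t_{n_j}$ can be taken to be increasing, and that consecutive gaps $t_{n_{j+1}}-t_{n_j}$ are bounded above in terms of $a$, $R_0$, and the typical displacement $\dist_X(g_1\cdots g_{n_j}o, g_1\cdots g_{n_{j+1}}o)$. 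Since by the ergodic theorem / Kingman and metric properness the quantity $\frac1N\dist_X(o,g_1\cdots g_N o)\to\ell(\mathsf m)$ and, more importantly, along a density-$(1-\epsilon')$ subset of indices the successive increments stay controlled, the parameters $t_{n_j}$ fill out $[0,T]$ with density at least $1-\epsilon$ up to a bounded-neighborhood error: each $\sigma(t)$ with $t\in[t_{n_j},t_{n_{j+1}}]$ lies within $\tfrac12 a (t_{n_{j+1}}-t_{n_j}) + K + R_0$ of $g_1\cdots g_{n_j}o$. Choosing $R$ to absorb this bound over the good blocks, and bounding the total $t$-measure of the bad blocks (those coming from the density-$\epsilon'$ exceptional indices, whose lengths are controlled because $t$ ranges in a quasi-geodesic parametrization and the corresponding index gaps have bounded Cesàro mass) by $\epsilon T$, gives the claim.

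I would organize the estimate as: (i) pass to the good set where Theorem~\ref{thm:RW track G} applies; (ii) extract the ordered parameters $t_{n_j}\nearrow+\infty$; (iii) establish a uniform-on-average upper bound $t_{n_{j+1}}-t_{n_j}\le C$ for a density-$(1-\epsilon'')$ set of $j$, using $\ell(\mathsf m)<\infty$ together with Property~\ref{item:bounded convergence} to rule out the parameters "turning back"; (iv) conclude that the complement of $\Nc_R(\cup_{n\ge1}g_1\cdots g_n o)$ inside $\sigma([0,T])$ has $t$-length $\le \epsilon T$ for all large $T$, by summing the lengths of the exceptional blocks and invoking the density bound from step (iii) and from Theorem~\ref{thm:RW track G}.

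The main obstacle, I expect, is step (iii): controlling the \emph{gaps} $t_{n_{j+1}}-t_{n_j}$ rather than just knowing $t_{n_j}\to+\infty$. A priori the quasi-geodesic could travel a long $t$-distance while the random walk, reindexed, makes little progress, or the map $n\mapsto t_n$ could be wildly non-monotone on the exceptional (density-$\epsilon'$) indices, creating long $t$-intervals uncovered by any nearby $g_1\cdots g_n o$. Resolving this requires combining: (a) the lower bound $\dist_X(g_1\cdots g_n o, g_1\cdots g_m o)\ge \tfrac1a|t_n-t_m|-K-2R_0$ coming from the quasi-geodesic inequality applied to the footpoints, which forbids two well-separated parameters from having too-close random-walk points; (b) metric properness, which bounds how many $g_1\cdots g_n o$ can crowd near a single point of $\sigma$; and (c) the linear-drift bound to ensure the random walk does not "pause" for a positive-density set of times — this last point is exactly what forces the footpoints to march steadily along $\sigma$ and hence cover it. Once (a)--(c) are assembled the length bookkeeping in step (iv) is routine.
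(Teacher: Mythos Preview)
Your overall strategy --- apply Theorem~\ref{thm:RW track G} to get footpoints $t_{n_j}$ along $\sigma$, then argue that these footpoints cover most of $[0,T]$ --- is exactly the paper's, and you are right that step (iii), controlling the $t$-gaps $t_{n_{j+1}}-t_{n_j}$, is the crux. But your proposed resolution of (iii) has a genuine gap.

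What is needed to show coverage is an \emph{upper} bound on each $t$-gap, which via the quasi-geodesic inequality reduces to an upper bound on $\dist_X(g_1\cdots g_{n_j}o,\,g_1\cdots g_{n_{j+1}}o)$. Your tool (a) gives only the reverse inequality; your tool (b), metric properness, is not a hypothesis of this theorem (it enters only in Theorem~\ref{thm:main universal}); and your tool (c), ``the walk does not pause'', is the \emph{lower} bound on drift --- the wrong direction, since pausing would make $t$-gaps small, not large. Moreover, even if you established $t_{n_{j+1}}-t_{n_j}\le C$ on a density-$(1-\epsilon'')$ set of $j$'s, that alone would not bound the \emph{total $t$-length} of the exceptional blocks in step (iv): a single exceptional block can have arbitrarily large $t$-length unless you relate it back to the index gap $n_{j+1}-n_j$.

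The paper supplies the missing device: the shifted sets
\[
A_k=\bigl\{\mathbf g:\dist_X(o,g_1\cdots g_n o)\le(\ell(\mathsf m)+\epsilon_0)\,n\ \text{for all }n\ge k\bigr\},
\]
with $k$ chosen so that $\mathsf m^{\Zb}(A_k)>1-\epsilon_0$. By Birkhoff, the set $\{n:S^n\mathbf g\in A_k\}$ has density $>1-\epsilon_0$; intersect it with the good index set from Theorem~\ref{thm:RW track G} and pass to a maximal $k$-separated subsequence $\{n_j\}$. Then for each $j$ one has the linear bound $\dist_X(g_1\cdots g_{n_j}o,\,g_1\cdots g_{n_{j+1}}o)\le(\ell(\mathsf m)+\epsilon_0)(n_{j+1}-n_j)$, hence $T_{j+1}-T_j\lesssim n_{j+1}-n_j$. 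This is precisely what converts the small Ces\`aro mass of large index gaps (which you do mention) into small total $t$-length, and without it your step (iv) does not go through.

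A smaller point: the claim that the $t_{n_j}$ can be taken increasing is unjustified --- quasi-geodesics can wiggle --- and the paper neither asserts nor uses this.
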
 

\begin{remark} For the particular examples we consider in Section~\ref{subsec:examples}, it suffices to consider the case when $a=1$. \end{remark}

\begin{proof}   Fix $\epsilon_0 > 0$ such that 
    $1-2\epsilon_0 > 0$ and 
$$
\frac{2 a^4}{1-2\epsilon_0}  \frac{\ell(\mathsf{m})+\epsilon_0}{\ell(\mathsf{m})}\epsilon_0<\epsilon.
$$

By Theorem \ref{thm:RW track G}, we can fix $R_0 > 0$ such that for $\mathsf{m}^{\Zb}$-a.e. $\mathbf{g} = (g_n) \in \Gsf^{\Zb}$, if $\sigma : \Rb \to X$ is a $(a, K)$-quasi-geodesic parametrizing an element of $P(\hat \zeta(\mathbf{g}), \zeta(\mathbf{g}))$, then 
   $$
\liminf_{N \to + \infty} \frac{1}{N} \# \left\{ 1 \le n \le N : \dist_X(g_1\cdots g_n o , \sigma|_{[0, + \infty)}) \leq R_0 \right\} > 1-\epsilon_0.
   $$
   
By assumption, for $\mathsf{m}^{\Zb}$-a.e. $\mathbf{g} = (g_n) \in \Gsf^{\Zb}$, 
\begin{equation} \label{eqn:linear growth}
 \ell(\mathsf{m}) =\lim_{n \rightarrow + \infty} \frac{1}{n} \dist_X(o, g_1 \cdots g_n o) > 0.
\end{equation}
For each $k \in \Nb$, let
$$
A_k : = \left\{ \mathbf{g} = (g_n) \in \Gsf^{\Zb} : \dist_X( o, g_1\cdots g_n o) \leq (\ell(\mathsf{m})+\epsilon_0)n  \text{ for all } n \geq k \right\}. 
$$
Then
$$
\lim_{k \rightarrow + \infty} \mathsf{m}^{\Zb}( A_k) = 1.
$$
Hence we can fix $k \in \Nb$ such that $\mathsf{m}^{\Zb}( A_k) > 1-\epsilon_0$. Recall that the shift map $S : \Gsf^{\Zb} \rightarrow \Gsf^{\Zb}$ 
preserves $\mathsf{m}^{\Zb}$ and is ergodic. Then by the Birkhoff ergodic theorem, for $\mathsf{m}^{\Zb}$-a.e. $\mathbf{g} \in \Gsf^{\Zb}$, 
$$
\lim_{N \rightarrow + \infty} \frac{1}{N} \# \left\{1 \le  n \le N: S^{n} \mathbf{g} \in A_k \right\}= \lim_{N \rightarrow + \infty} \frac{1}{N}\sum_{n=1}^{N} \mathbf{1}_{A_k}( S^n \mathbf{g}) =  \mathsf{m}^{\Zb}( A_k) > 1-\epsilon_0.
$$
Thus for $\mathsf{m}^{\Zb}$-a.e. $\mathbf{g}  = (g_n) \in \Gsf^{\Zb}$, if $\sigma : \Rb \to X$ is a $(a, K)$-quasi-geodesic parametrizing an element of $P(\hat \zeta(\mathbf{g}), \zeta(\mathbf{g}))$, then  
\begin{equation}\label{eqn:defn of c}
\liminf_{N \to + \infty} \frac{1}{N} \# \left\{1 \le  n \le N: 
\begin{matrix}
S^{n} \mathbf{g} \in A_k \text{ and} \\
 \dist_X(g_1\cdots g_n o , \sigma|_{[0, + \infty)}) \leq R_0 
\end{matrix}\right\} \ge 1-2\epsilon_0.
\end{equation} 

Fix $\mathbf{g} = (g_n) \in \Gsf^{\Zb}$ and $\sigma : \Rb \to X$ such that 
Equations~ \eqref{eqn:linear growth} and \eqref{eqn:defn of c}  hold.
 Let 
$$
I_0 := \left\{ n \in \Nb  : S^{n} \mathbf{g} \in A_k \text{ and } \dist_X(g_1\cdots g_n o , \sigma|_{[0, + \infty)}) \leq R_0 \right\}
$$
and let $I := \{ n_1 < n_2 < \cdots\} \subset I_0$ be a maximal $k$-separated subset, i.e., $I$ is a maximal subset of $I_0$ such that $|n_i - n_j| \ge k$ for all distinct $n_i, n_j \in I_0$. Notice that if $n_{j+1} \ge n_j+2k$, then by maximality 
$$
n_j+k, n_j + k +1, \dots, n_{j+1} -k \notin I_0. 
$$
So 
$$
\#(I_0 \cap [1,N]) \leq N - \sum_{n_{j+1} \leq N, \ n_{j+1} - n_j \geq 2k}   n_{j+1}-n_j-2k+1.
$$
Hence 
\begin{equation} \label{eqn:maximality}
\limsup_{N \rightarrow + \infty} \frac{1}{N} \sum_{n_{j+1} \leq N, \ n_{j+1} - n_j \geq 2k}   n_{j+1}-n_j-2k+1 \leq 2\epsilon_0. 
\end{equation}

Fix $R > 0$ such that 
$$
2 \cdot \frac{R - R_0 - K}{a^2 ( \ell(\mathsf{m}) + \epsilon_0)} - \frac{2 R_0 + K}{\ell(\mathsf{m}) + \epsilon_0} \geq 2k.
$$
For each $j \in \Nb$, fix $T_j \ge 0$ such that  
$$
\dist_X(g_1 \cdots g_{n_j} o, \sigma(T_j)) \leq R_0.
$$
Let 
$$
\Omega:= \{ t \geq 0 : \sigma(t) \notin \Nc_R( \cup_{n \ge 1} g_1 \cdots g_n o)\}.
$$
For $J \ge 2$, if  $t \in \Omega \cap [T_1,T_J]$, then there exists $j \leq J-1$ with 
$$
T_j + \frac{R - R_0 - K}{a} \leq t \leq  T_{j+1}- \frac{R - R_0 - K}{a}.  
$$
Further, 
$$
t \in \Omega \cap [T_j, T_{j+1}] \subset \left[ T_j+ \frac{R - R_0 - K}{a} , T_{j+1} - \frac{R - R_0 -K}{a} \right]
$$
and so, using the fact that $S^{n_j}\mathbf{g} \in A_k$, we have 
$$\begin{aligned}
2 \cdot \frac{R - R_0 - K}{a} & \leq T_{j+1}-T_j \leq a \dist_X(\sigma(T_{j}), \sigma(T_{j+1})) + a K \\
& \le a \left( 2R_0 + \dist_X(g_1\cdots g_{n_j} o, g_1\cdots g_{n_{j+1}} o) \right) + a K \\
& \leq a ( 2R_0 + K) + a (\ell(\mathsf{m})+\epsilon_0)(n_{j+1}-n_j).
\end{aligned} 
$$
So, in this case, 
$$
n_{j+1} - n_j \ge  2 \cdot \frac{R - R_0 - K}{a^2 ( \ell(\mathsf{m}) + \epsilon_0)} - \frac{2 R_0 + K}{\ell(\mathsf{m}) + \epsilon_0} \geq 2k.
$$

Thus 
\begin{align*}
& {\rm Leb}(\Omega \cap [T_1,T_J]) \\
& \leq \sum_{ \substack{ j \leq J-1, \ T_j < T_{j+1} \\ \Omega \cap [T_j, T_{j+1}] \neq \emptyset } } T_{j+1} - T_j - 2 \cdot \frac{R - R_0 - K}{a} \\
& \leq \sum_{ \substack{ j \leq J-1, \ T_j < T_{j+1} \\ \Omega \cap [T_j, T_{j+1}] \neq \emptyset } } a ( 2R_0 + K) + a (\ell(\mathsf{m})+\epsilon_0)(n_{j+1}-n_j) - 2 \cdot \frac{R - R_0 - K}{a} \\
& \leq a (\ell(\mathsf{m})+\epsilon_0) \sum_{ n_{j+1} \leq n_{J}, \  n_{j+1} - n_j \geq 2k} n_{j+1}-n_j-2k.
\end{align*} 
Hence by Equation \eqref{eqn:maximality},
$$
\limsup_{J \rightarrow + \infty} \frac{1}{n_J} {\rm Leb}(\Omega \cap [0,T_J]) \leq 2a(\ell(\mathsf{m})+\epsilon_0)\epsilon_0.
$$
Since 
$$
\frac{\ell(\mathsf{m})}{a} \le \liminf_{J \to + \infty} \frac{T_J}{n_J} \le \limsup_{J \to + \infty} \frac{T_J}{n_J} \le a \ell(\mathsf{m}), 
$$ 
 we then have 
$$
\limsup_{J \rightarrow + \infty} \frac{1}{T_J} {\rm Leb}(\Omega \cap [0,T_J]) \leq 2a^2 \cdot  \frac{\ell(\mathsf{m})+\epsilon_0}{\ell(\mathsf{m})}\epsilon_0.
$$

To finish the proof it suffices to show that $\limsup_{J \rightarrow + \infty} \frac{T_{J+1}}{T_J} \leq \frac{a^2}{1-2\epsilon_0}$.  Indeed, then for any $T > 0$ there exists $J$ with $T_J \leq T \leq T_{J+1}$ and hence 
\begin{align*}
\limsup_{T \rightarrow + \infty} \frac{1}{T} {\rm Leb}(\Omega \cap [0,T])  & \leq \limsup_{J \rightarrow + \infty} \frac{1}{T_J} {\rm Leb}(\Omega \cap [0,T_{J+1}])\\
& \leq \frac{2 a^4}{1-2\epsilon_0}  \frac{\ell(\mathsf{m})+\epsilon_0}{\ell(\mathsf{m})}\epsilon_0<\epsilon.
\end{align*}

Suppose for a contradiction that $\limsup_{J \rightarrow + \infty} \frac{T_{J+1}}{T_J} > \frac{a^2}{1-2\epsilon_0}$. Then there exist $J_i \rightarrow \infty$ and $c > \frac{a^2}{1-2\epsilon_0}$ such that 
$$
\lim_{i \rightarrow + \infty} \frac{T_{J_i+1}}{T_{J_i}} = c,
$$
 where we a priori allow $c = + \infty$ with the convention $\frac{1}{+\infty} = 0$.
Then 
$$
\frac{c}{a^2} \le \liminf_{i \rightarrow + \infty} \frac{n_{J_i+1}}{n_{J_i}} \le \limsup_{i \rightarrow + \infty} \frac{n_{J_i+1}}{n_{J_i}} \le a^2 c.
$$
By maximality, we have 
$$
n_{J_i}+k, n_{J_i} + k +1, \dots, n_{J_i+1} -k \notin I_0.
$$
So 
\begin{align*}
1-2\epsilon_0&  \leq \liminf_{i \rightarrow + \infty} \frac{1}{n_{J_i+1}} \#(I_0 \cap [0,n_{J_i+1}]) \\
& \leq  \liminf_{i \rightarrow + \infty} \frac{n_{J_i+1} - ( n_{J_i+1} -n_{J_i} - 2k+1)}{n_{J_i+1}} \leq \frac{a^2}{c} < 1-2\epsilon_0.
\end{align*}
Thus we have a contradiction. 
\end{proof} 

\section{Non-singular stationary measures} \label{sec:non-sing}

 We are now ready to prove Theorem \ref{thm:main universal}, which we restate below in a slightly different format. 
 
 We continue to suppose $\Gsf$, $\overline{X}$, $o \in X$, and $P : \partial X \times \partial X \rightarrow \mathscr{P}(\QG(X))$ are as in Section~\ref{sec:standing assumptions}. Let $a \geq 1$ and $K \geq 0$ be the constants in the definition of $P$. 
     
\begin{theorem} \label{thm:comm} Assume
\begin{itemize}
\item  the $\Gsf$-action on $X$ is metrically proper,
\item  $\mathsf{m}_1$ and $\mathsf{m}_2$ are probability measures on $\Gsf$ which have finite first moments, positive linear drifts, and are well-behaved with respect to $\overline{X}$ and $P$, and 
\item for $j = 1, 2$, $\Hsf_j < \Gsf$ is the group generated by the support of $\mathsf{m}_j$ and $\nu_j := \zeta_* \mathsf{m}_j^{\Nb} = \zeta_* \mathsf{m}_j^{\Zb}$ denotes the forward hitting measure of the random walk induced by $\mathsf{m}_j$.
\end{itemize} 
If $\nu_1$ and $\nu_2$ are non-singular, then $\Hsf_1$ and $\Hsf_2$ are commensurable.
\end{theorem}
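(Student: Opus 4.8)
The plan is to play Theorem~\ref{thm:RW track G} and Theorem~\ref{thm:G track RW} against one another. Suppose $\nu_1$ and $\nu_2$ are not singular. Lebesgue--decomposing $\nu_1$ with respect to $\nu_2$ yields a nonzero measure $\lambda$ with $\lambda\le\nu_1$ and $\lambda\le\nu_2$. Using the disintegrations $\mathsf{m}_j^{\Zb}=\int(\mathsf{m}_j^{\Zb})_\xi\,d\nu_j(\xi)$ over the map $\zeta$, I would form the finite nonzero measure $\Theta:=\int(\mathsf{m}_1^{\Zb})_\xi\otimes(\mathsf{m}_2^{\Zb})_\xi\,d\lambda(\xi)$ on $\Gsf^{\Zb}\times\Gsf^{\Zb}$. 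Its two marginals are absolutely continuous with respect to $\mathsf{m}_1^{\Zb}$ and $\mathsf{m}_2^{\Zb}$, so every $\mathsf{m}_j^{\Zb}$-a.e.\ property of the $j$-th coordinate holds $\Theta$-a.e. In particular, for $\Theta$-a.e.\ $(\mathbf{g},\mathbf{h})$: both trajectories escape to infinity linearly, satisfy the conclusions of Theorems~\ref{thm:RW track G} and~\ref{thm:G track RW} (for a sequence $\epsilon_k\to0$), have $D$ finite at their endpoint pairs (Property~\ref{item:finite D}) and $\hat\zeta\neq\zeta$ (independence plus Property~\ref{item:non-atomic}), and have $\zeta(\mathbf{g})=\zeta(\mathbf{h})=:\xi$ lying in the Property-\ref{item:asymptotic} full-measure set of both $\nu_1$ and $\nu_2$. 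This is the step that replaces ``same limit set'' in Susskind--Swarup by ``a shared random direction''.

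Fix such a pair. Let $\sigma\in P(\hat\zeta(\mathbf{g}),\xi)$ be a quasi-geodesic tracked by the $\mathsf{m}_1$-walk and $\tau\in P(\hat\zeta(\mathbf{h}),\xi)$ one whose positive ray is density-tracked by the $\mathsf{m}_2$-walk. By Property~\ref{item:asymptotic} the rays of $\sigma$ and $\tau$ eventually lie in $\kappa$-neighborhoods of one another, and since both are $(a,K)$-quasi-geodesics the synchronizing reparametrization between them is bi-Lipschitz with controlled constants. Converting the index densities of Theorem~\ref{thm:RW track G} into parameter densities via positivity of the drifts (as in the proof of Theorem~\ref{thm:G track RW}) and intersecting with the parameter set from Theorem~\ref{thm:G track RW}, after choosing the $\epsilon_k$ small enough I get a positive lower density set of parameters $t$ at which $\sigma(t)$ is within $R$ of some $\omega^{(1)}_{n(t)}o$ and within $R+\kappa$ of some $\omega^{(2)}_{m(t)}o$. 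Because $\sigma$ is a quasi-geodesic, $t\mapsto n(t)$ is boundedly-many-to-one, so $\{n(t)\}$ has positive upper density in $\Nb$: there is a positive-upper-density set $I\subseteq\Nb$ and, for $n\in I$, an index $m(n)$ with $\dist_X(\omega^{(1)}_n o,\omega^{(2)}_{m(n)}o)\le R':=2R+\kappa$. Metric properness now makes $F:=\{f\in\Gsf:\dist_X(o,fo)\le R'\}$ finite with $(\omega^{(1)}_n)^{-1}\omega^{(2)}_{m(n)}\in F$ for $n\in I$, so pigeonholing gives a fixed $f_0\in F$ and a positive-upper-density $I_0\subseteq I$ with $\omega^{(2)}_{m(n)}=\omega^{(1)}_n f_0$ for all $n\in I_0$.

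Since $\omega^{(1)}_n\in\Hsf_1$ and $\omega^{(2)}_{m(n)}\in\Hsf_2$, we have $f_0\in\Hsf_1\Hsf_2$; write $f_0=a_0b_0$ with $a_0\in\Hsf_1$, $b_0\in\Hsf_2$. Then for $n\in I_0$,
\[
\omega^{(1)}_n a_0 \;=\; \omega^{(2)}_{m(n)} b_0^{-1}\;\in\;\Hsf_1\cap\Hsf_2\;=:\;\Lambda ,
\]
so the image of the $\mathsf{m}_1$-walk under the projection $\Hsf_1\to\Lambda\backslash\Hsf_1$ is the single coset $\Lambda a_0^{-1}$ for every $n\in I_0$; that is, the Markov chain induced by $\mathsf{m}_1$ on $\Lambda\backslash\Hsf_1$ visits one state with positive upper frequency. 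It then remains to show $[\Hsf_1:\Lambda]<\infty$. Re-rooting at the first visit to that state and applying the strong Markov property, the successive return times of the quotient chain have i.i.d.\ gaps; by renewal theory, positive upper frequency of returns forces a finite expected gap, i.e.\ the quotient chain is positive recurrent at that state. On the other hand the counting measure on $\Lambda\backslash\Hsf_1$ is always stationary for this chain, and a finite stationary measure can coexist with it only when $\Lambda\backslash\Hsf_1$ is finite; making this implication rigorous is where the hypotheses are really used, through the induced walk on $\Lambda$, which inherits a finite first moment by Wald's identity and positive drift, hence an unbounded orbit for the metrically proper $\Lambda$-action. Since the whole hypothesis is symmetric in $(\mathsf{m}_1,\Hsf_1)$ and $(\mathsf{m}_2,\Hsf_2)$, the same argument gives $[\Hsf_2:\Lambda]<\infty$, so $\Hsf_1$ and $\Hsf_2$ are commensurable. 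The steps I expect to be delicate are the density bookkeeping and reparametrization control in the second paragraph and, above all, the ``positive return frequency $\Rightarrow$ finite index'' implication for random walks on groups in the last step — this is the real heart of the argument.
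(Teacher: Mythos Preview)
Your architecture matches the paper's: use non-singularity together with Theorems~\ref{thm:RW track G}, \ref{thm:G track RW} and Property~\ref{item:asymptotic} to show that, for a positive-$\mathsf m_1^{\Nb}$-measure set of paths, the $\mathsf m_1$-walk lands in a fixed $\Hsf_2$-coset with positive upper density, and then deduce finite index. The density bookkeeping you sketch is what the paper carries out in its auxiliary lemma. (Minor slip: Lebesgue decomposition only gives $\lambda\ll\nu_2$, not $\lambda\le\nu_2$, but absolute continuity is all you need.) The genuine gap is in the last step, where your coexistence claim (``a finite stationary measure and counting measure force a finite state space'') requires the quotient chain on $\Lambda\backslash\Hsf_1$ to be irreducible. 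Since $\supp\mathsf m_1$ is only assumed to generate $\Hsf_1$ as a \emph{group}, not as a semigroup, irreducibility can fail, and your proposed rescue via an induced walk on $\Lambda$ and Wald's identity does not address this: you have only established returns to the coset $\Lambda a_0^{-1}$, not to $\Lambda$, and finite moment or positive drift of an induced walk says nothing about $[\Hsf_1:\Lambda]$.

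The paper replaces recurrence theory with a maximum-principle argument. From the positive-density estimate and Fatou's lemma, the Ces\`aro averages $\mu_N=\frac1N\sum_{n=1}^N(\pi_*\mathsf m_1)*\mathsf m_1^{*(n-1)}$ on $\Hsf_2\backslash\Gsf$ put uniformly positive mass on a finite set, so a weak-$*$ limit $\mu$ is a nonzero finite $\mathsf m_1$-stationary measure supported on $\Hsf_2\backslash\Hsf_2\Hsf_1$. The set of cosets where $\mu$ attains its maximum is finite and nonempty; stationarity makes it closed under right multiplication by $(\supp\mathsf m_1)^{-1}$, and finiteness upgrades this inclusion to an equality, hence to closure under $\supp\mathsf m_1$ and therefore under $\langle\supp\mathsf m_1\rangle=\Hsf_1$. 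Transitivity of the right $\Hsf_1$-action then gives $\#(\Hsf_2\backslash\Hsf_2\Hsf_1)<\infty$. The same ``finite set closed under the semigroup is closed under the group'' trick would also repair your positive-recurrence route: the closed communicating class of your state $x_0$ is finite (restricted counting measure is excessive there, hence invariant by irreducible recurrence, hence proportional to the finite stationary measure), and a finite set closed under right multiplication by $\supp\mathsf m_1$ is $\Hsf_1$-invariant, hence equals all of $\Lambda\backslash\Hsf_1$.
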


Using Theorem~\ref{thm:G track RW} we will show that a positive proportion of the random walks generated by  $\mathsf m_1$ stays in a bounded neighborhood of $\Hsf_2 o$ most of the time.

\begin{lemma} \label{lem:RW track RW}
With the hypothesis of Theorem~\ref{thm:comm}, for any $\epsilon > 0$ there exist a measurable subset $E \subset \Gsf^{\Nb}$ and $R > 0$ such that 
\begin{enumerate}
\item $\mathsf{m}_1^{\Nb}(E) > 0$ and
\item if $\mathbf{g} = (g_n) \in E$, then
$$
\liminf_{N \to + \infty} \frac{1}{N} \# \{ 1 \le n \le N : \dist_X(g_1 \cdots g_n o, \Hsf_2 o) < R \} > 1 - \epsilon.
$$
\end{enumerate}
\end{lemma}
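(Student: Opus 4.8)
The idea is to transport the "quasi-geodesics track random walks" statement of Theorem~\ref{thm:G track RW} for $\mathsf m_2$ over to the $\mathsf m_1$-random walk, using the non-singularity of $\nu_1$ and $\nu_2$. First, apply Theorem~\ref{thm:G track RW} to $\mathsf m_2$: given $\epsilon > 0$, obtain $R_0 > 0$ so that for $\mathsf m_2^{\Zb}$-a.e. $\mathbf h = (h_n)$, every $(a,K)$-quasi-geodesic $\sigma$ parametrizing an element of $P(\hat\zeta(\mathbf h), \zeta(\mathbf h))$ satisfies
$$
\liminf_{T \to +\infty} \frac{1}{T} \left| \left\{ t \in [0,T] : \sigma(t) \in \Nc_{R_0}\!\left(\textstyle\bigcup_{n \ge 1} h_1 \cdots h_n o\right) \right\} \right| > 1 - \epsilon.
$$
Projecting to one-sided sequences, the set of $y^+ \in \partial X$ such that there is \emph{some} $y^- \ne y^+$ for which some $\sigma \in P(y^-, y^+)$ spends a $(1-\epsilon)$-proportion of its time in $\Nc_{R_0}(\Hsf_2 o)$ has full $\nu_2$-measure — here one uses that $\bigcup_n h_1\cdots h_n o \subset \Hsf_2 o$ and that $\hat\zeta$ is non-atomic, so the backward limit $y^-$ can be chosen freely off $\{y^+\}$. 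Call this Borel set $B_2 \subset \partial X$; then $\nu_2(B_2) = 1$, so by non-singularity $\nu_1(B_2) > 0$.

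**Transferring to the $\mathsf m_1$-walk.**
Next, apply Theorem~\ref{thm:RW track G} to $\mathsf m_1$: there is $R_1 > 0$ such that for $\mathsf m_1^{\Zb}$-a.e. $\mathbf g = (g_n)$, every $(a,K)$-quasi-geodesic $\sigma$ parametrizing an element of $P(\hat\zeta(\mathbf g), \zeta(\mathbf g))$ satisfies
$$
\liminf_{N \to +\infty} \frac{1}{N} \#\left\{ 1 \le n \le N : \dist_X(g_1 \cdots g_n o, \sigma|_{[0,+\infty)}) \le R_1 \right\} > 1 - \epsilon.
$$
Now restrict to the event $E \subset \Gsf^{\Nb}$ consisting of those $\mathbf g$ for which $\zeta(\mathbf g) \in B_2$, the above tracking holds, and (using $\hat\nu_1$ non-atomic and independence of $\hat\zeta(\mathbf g), \zeta(\mathbf g)$) $\hat\zeta(\mathbf g) \ne \zeta(\mathbf g)$; since $\nu_1(B_2) > 0$ we get $\mathsf m_1^{\Nb}(E) > 0$. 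For $\mathbf g \in E$, pick $y^- \ne \zeta(\mathbf g)$ and $\sigma_2 \in P(y^-, \zeta(\mathbf g))$ realizing membership in $B_2$, and pick any $\sigma_1 \in P(\hat\zeta(\mathbf g), \zeta(\mathbf g))$ (nonempty by finiteness of $D$). By Property~\ref{item:asymptotic}, the forward rays of $\sigma_1$ and $\sigma_2$ are within $\kappa$-neighborhoods of each other (for suitable truncations), because they share the forward endpoint $\zeta(\mathbf g)$. Combining: a $(1-2\epsilon)$-proportion of the times $n$ have $g_1\cdots g_n o$ within $R_1$ of $\sigma_1|_{[0,\infty)}$, hence within $R_1 + \kappa + (\text{bounded Morse/endpoint error})$ of $\sigma_2|_{[0,\infty)}$; and since $\sigma_2$ spends a $(1-\epsilon)$-proportion of its arclength within $R_0$ of $\Hsf_2 o$, a pigeonhole/counting argument on the quasi-geodesic parametrization of $\sigma_2$ shows a definite proportion of the $g_1\cdots g_n o$ land within $R := R_1 + \kappa + R_0 + (\text{const})$ of $\Hsf_2 o$. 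Shrinking $\epsilon$ at the outset makes this proportion exceed $1 - \epsilon$ for the originally desired $\epsilon$ (after relabelling).

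**The main obstacle.**
The delicate point is the counting argument that converts "$\sigma_2$ spends most of its \emph{arclength} near $\Hsf_2 o$" together with "most points $g_1\cdots g_n o$ are near $\sigma_2$" into "most points $g_1\cdots g_n o$ are near $\Hsf_2 o$". One must control how the indices $n$ distribute along the parameter interval of $\sigma_2$: because the walk has positive linear drift and $\sigma_2$ is a quasi-geodesic, the points $g_1\cdots g_n o$ projected to $\sigma_2$ advance at a roughly linear rate, so a positive-density set of bad indices would force a positive-density set of bad parameters, contradicting the arclength estimate — this is essentially the same bookkeeping already carried out in the proof of Theorem~\ref{thm:G track RW}, and the constants must be tracked with the same care. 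A secondary nuisance is making all the "bounded Morse/endpoint errors" uniform: this is where one invokes that all elements of $P$ are $(a,K)$-quasi-geodesics with fixed constants, so the relevant fellow-travelling and projection estimates depend only on $a, K, \kappa$ and not on the particular $\mathbf g$.
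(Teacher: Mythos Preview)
Your outline is correct and follows essentially the same route as the paper: apply Theorem~\ref{thm:G track RW} to $\mathsf m_2$ to get a $\nu_2$-full (hence $\nu_1$-positive) set of forward endpoints whose associated quasi-geodesic rays spend most of their length near $\Hsf_2 o$, apply Theorem~\ref{thm:RW track G} to $\mathsf m_1$ so that the $\mathsf m_1$-walk tracks its own quasi-geodesic, use Property~\ref{item:asymptotic} to pass between the two rays sharing the forward endpoint, and then run a linear-drift/separation counting argument to convert the arclength statement into a statement about indices. The paper carries out exactly this plan; the only substantive work you have deferred is the counting step, and your diagnosis that it reuses the $k$-separated bookkeeping from the proof of Theorem~\ref{thm:G track RW} (together with an $A_k$-type set controlling successive displacements) is accurate.
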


Assuming Lemma \ref{lem:RW track RW} for a moment, we prove the theorem.

\begin{proof}[Proof of Theorem~\ref{thm:comm}]

By Lemma \ref{lem:RW track RW}, there exist a measurable subset $E \subset \Gsf^{\Nb}$ with $\mathsf{m}_1^{\Nb}(E) > 0$ and  $R > 0$ such that for $\mathbf{g} = (g_n) \in E$,
$$
\liminf_{N \to + \infty} \frac{1}{N} \# \{ 1 \le n \le N : \dist_X(g_1 \cdots g_n o, \Hsf_2 o) < R \} > 1/2.
$$
Since the $\Gsf$-action on $X$ is metrically proper, there exists a finite set $F \subset \Gsf$ such that $\{ g \in \Gsf : \dist_X (g o, \Hsf_2 o) < R \} \subset \Hsf_2 F$. Then for all $\mathbf{g} = (g_n) \in E$,
$$
\liminf_{N \to  + \infty} \frac{1}{N} \# \{ 1 \le n \le N : g_1 \cdots g_n \in \Hsf_2 F \} > 1/2.
$$

Let $\pi : \Gsf  \to \Hsf_2 \ba \Gsf$ denote the quotient map and let $\pi_* \mathsf{m}_1$ denote the pushforward of $\mathsf{m}_1$ on $\Hsf_2 \ba \Gsf$. One can see that for each $n \in \Nb$ and $g \in \Gsf$,
$$
\mathsf{m}_1^{*n}(\Hsf_2 g) = \left( (\pi_* \mathsf{m}_1) * \mathsf{m}_1^{*(n-1)} \right) (\Hsf_2 g),
$$
where on the left hand side we have $\Hsf_2 g \subset \Gsf$ and on the right hand side we have $\Hsf_2 g \in \Hsf_2 \ba \Gsf$. For each $N \in \Nb$, consider the probability measure 
$$
\mu_N :=  \frac{1}{N} \sum_{n = 1}^{N} (\pi_* \mathsf{m}_1) * \mathsf{m}_1^{*(n-1)}$$
on $\Hsf_2 \ba \Gsf$.  

By Fatou's lemma,
$$\begin{aligned}
\liminf_{ N \to + \infty} \frac{1}{N} \sum_{n = 1}^{N} \mathsf{m}_1^{*n}(\Hsf_2 F) & = \liminf_{ N \to + \infty} \frac{1}{N} \int \# \{ 1 \le n \le N : g_1 \cdots g_n \in \Hsf_2 F \} d \mathsf{m}_1^{\Nb}(\mathbf{g}) \\
& \ge  \int \liminf_{ N \to + \infty} \frac{1}{N}  \# \{ 1 \le n \le N : g_1 \cdots g_n \in \Hsf_2 F \} d \mathsf{m}_1^{\Nb}(\mathbf{g}) \\
& \ge \mathsf{m}_1^{\Nb}(E) / 2 > 0.
\end{aligned}
$$
Hence on the compact subset $\Hsf_2 F \subset \Hsf_2 \ba \Gsf$, the measure $\mu_N$ is uniformly bounded from below by $\mathsf{m}_1^{\Nb}(E)/3 > 0$, for all large $N \in \Nb$. 

Since $ \Hsf_2 \ba \Gsf$ is countable, we can fix a subsequence $\{\mu_{N_j}\}$ and a measure $\mu$ such that for any finite subset $F' \subset \Hsf_2 \ba \Gsf$, the sequence $\{\mu_{N_j}|_{F'}\}$ converges in the \mbox{weak-$*$} topology to $\mu|_{F'}$. 
Then $\mu$ is a finite non-zero measure on $\Hsf_2 \ba \Gsf$. By construction, the measure $\mu$ is $\mathsf{m}_1$-stationary, i.e., $\mu * \mathsf{m}_1 = \mu$. Further, since $\supp \mathsf{m}_1 \subset \Hsf_1$, the measure $\mu$ is supported on $\Hsf_2 \ba \Hsf_2 \Hsf_1$.
Let 
$$
\hat \Hsf_1 := \left\{ h \in \Hsf_1 : \mu(\Hsf_2 h) = \max_{g \in \Hsf_1} \mu(\Hsf_2 g) \right\}.
$$
Since $\mu$ is a finite non-zero measure,
$$
0 < \# \left( \Hsf_2 \ba \Hsf_2 \hat \Hsf_1 \right)< + \infty.
$$

Now for $h \in \hat \Hsf_1$,
$$
\mu(\Hsf_2 h) = (\mu * \mathsf{m}_1)(\Hsf_2 h) = \sum_{g \in \Gsf} \mu(\Hsf_2 h g^{-1}) \mathsf{m}_1(g).
$$
This implies
$$
\mu(\Hsf_2 h) = \mu(\Hsf_2 h g^{-1}) \quad \text{for all} \quad g \in \supp \mathsf{m}_1.
$$
In particular, 
$h \cdot \left(\supp \mathsf{m}_1\right)^{-1} \subset \hat \Hsf_1$.
Since this holds for any $h \in \hat \Hsf_1$,
$$
\Hsf_2 \ba \Hsf_2 \hat \Hsf_1 \left(\supp \mathsf{m}_1\right)^{-1} \subset \Hsf_2 \ba \Hsf_2 \hat \Hsf_1.
$$
Since $\Hsf_2 \ba \Hsf_2 \hat \Hsf_1$ is a finite set, this implies that 
$$
\Hsf_2 \ba \Hsf_2 \hat \Hsf_1 \left(\supp \mathsf{m}_1\right)^{-1} = \Hsf_2 \ba \Hsf_2 \hat \Hsf_1.
$$
Thus
$$
\Hsf_2 \ba \Hsf_2 \hat \Hsf_1 = \Hsf_2 \ba \Hsf_2 \hat \Hsf_1\left(\supp \mathsf{m}_1\right).
$$
Then, since $\langle \supp \mathsf{m}_1 \rangle = \Hsf_1$,
$$
\Hsf_2 \ba \Hsf_2 \Hsf_1 =  \Hsf_2 \ba \Hsf_2 \hat \Hsf_1.
$$
Therefore, $$
\# \left(\Hsf_2 \ba \Hsf_2 \Hsf_1\right) < + \infty,
$$
and hence $\Hsf_1 \cap \Hsf_2$ is a finite index subgroup of $\Hsf_1$. Switching $\Hsf_1$ and $\Hsf_2$, the same argument implies that $\Hsf_1 \cap \Hsf_2$ is a finite index subgroup of $\Hsf_2$ as well. 
\end{proof}

\subsection{Proof of Lemma \ref{lem:RW track RW}}

By Theorem \ref{thm:RW track G} applied to $\mathsf{m}_1$, there exists $R_1 = R_1(\epsilon)> 0$ such that for $\mathsf{m}_1^{\Zb}$-a.e. $\mathbf{g} = (g_n) \in \Gsf^{\Zb}$, if $\sigma_1 : \R \to X$  is a $(a, K)$-quasi-geodesic parametrizing an element of $P(\hat \zeta(\mathbf{g}), \zeta(\mathbf{g}))$, then 
   $$
   \liminf_{N \to + \infty} \frac{1}{N} \# \left\{ 1 \le n \le N : \dist_X(g_1\cdots g_n o , \sigma_1|_{[0, + \infty)}) \leq R_1 \right\} > 1-\epsilon/4.
   $$

   Since $\mathsf{m}_1$ has finite first moment and positive linear drift, there exists $k \in \Nb$ such that the set 
$$
A_k := \left\{ \mathbf{g} = (g_n) \in \Gsf^{\Zb} :  \dist_{X}(o, g_1 \cdots g_n o) >  2 R_1 (1 + a) + K \text{ for all } n \ge k \right\}
$$
satisfies
$$
\mathsf{m}_1^{\Zb}(A_k) > 1 - \epsilon/4.
$$
Recall that $S : (\Gsf^{\Zb}, \mathsf{m}_1^{\Zb}) \to (\Gsf^{\Zb}, \mathsf{m}_1^{\Zb})$ denotes the shift map. Similar to the proof of Theorem \ref{thm:G track RW}, we have from the Birkhoff ergodic theorem that for $\mathsf{m}_1^{\Zb}$-a.e. $\mathbf{g} = (g_n) \in \Gsf^{\Zb}$, if $\sigma_1 : \R \to X$ is a $(a, K)$-quasi-geodesic parametrizing an element of $P(\hat \zeta(\mathbf{g}), \zeta(\mathbf{g}))$, then 
\begin{equation} \label{eqn:RW track RW m1}
 \liminf_{N \to + \infty} \frac{1}{N} \# \left\{1 \le  n \le N: 
 \begin{matrix}
 S^{n} \mathbf{g} \in A_k \text{ and}\\
  \dist_X(g_1\cdots g_n o , \sigma_1 |_{[0, + \infty)}) \leq R_1
 \end{matrix} \right\} >
1-\epsilon/2.
\end{equation}

Now fix $\epsilon_1 > 0$ such that 
$$
\frac{ (2k+1)a\ell(\mathsf{m}_1) }{R_1} \cdot \epsilon_1 < \frac{\epsilon}{2}.
$$
By  Theorem \ref{thm:G track RW} applied to $\mathsf{m}_2$, there exists $R_2 = R_2(\epsilon_1) > 0$ such that for $\nu_2$-a.e. $  x \in \partial  X$, there exists $y \in \partial X$ so that if $\sigma_2 : \Rb \to X$  is a $(a, K)$-quasi-geodesic parametrizing an element of $P(y, x)$, then 
\begin{equation} \label{eqn:RW track RW m2}
\liminf_{T \rightarrow +\infty} \frac{1}{T} \abs{ \left\{ t \in [0,T] : \sigma_{2}(t) \in \Nc_{R_2}\left( \Hsf_2 o\right)\right\} } > 1 - \epsilon_1.
\end{equation}
By Property \ref{item:non-atomic}, we may assume $y \neq x$.

Let  $E' \subset \partial  X$ denote the set of points satisfying Equation \eqref{eqn:RW track RW m2}. Since $\nu_1 = \zeta_* \mathsf{m}_1^{\Zb}$ and $\nu_2$ are non-singular, we have 
 $\mathsf{m}_1^{\Zb}(\zeta^{-1} E') > 0$. Hence, there exists a subset $E  \subset \Gsf^{\Zb}$ such that $\mathsf{m}_1^{\Zb}(E) > 0$ and that for any $\mathbf{g} \in E$, $\mathbf{g}$ and $\zeta(\mathbf{g})$ satisfy Equations \eqref{eqn:RW track RW m1} and \eqref{eqn:RW track RW m2} respectively. We may further assume that 
$$\lim_{n \to + \infty} \frac{1}{n} \dist_X(o, g_1 \cdots g_n o) = \ell(\mathsf{m}_1) > 0
$$
for all $\mathbf{g} = (g_n) \in E$.

Then by Property \ref{item:asymptotic},  for any $\mathbf{g} = (g_n) \in E$ and any $(a, K)$-quasi-geodesic   $\sigma : \Rb \to X$ parametrizing an element of $P(\hat \zeta(\mathbf{g}), \zeta (\mathbf{g}))$, we have 
\begin{equation} \label{eqn:RW track RW nonsing0}
\liminf_{N \to + \infty} \frac{1}{N} \# \left\{1 \le  n \le N:
\begin{matrix}
    S^{n} \mathbf{g} \in A_k \text{ and} \\
     \dist_X(g_1\cdots g_n o , \sigma |_{[0, + \infty)}) \leq R_1
\end{matrix}  \right\} >
1-\epsilon/2
\end{equation}
and
\begin{equation} \label{eqn:RW track RW nonsing}
\liminf_{T \rightarrow +\infty} \frac{1}{T} \abs{ \left\{ t \in [0,T] : \sigma(t) \in \Nc_{R_2 + \kappa}\left( \Hsf_2 o\right)\right\} } > 1 - \epsilon_1.
\end{equation}
(Here $\kappa > 0$ is the number satisfying Property \ref{item:asymptotic}. Unfortunately, due to the large number of parameters, the very similar symbols $k$, $\kappa$, $K$ are used in this proof.)

We claim that $E$ and any 
$$
R > (1 + a) R_1 + K + R_2 + \kappa
$$
satisfy the lemma. Fix such $R$.

Fixing $\mathbf{g} = (g_n) \in E$ and $\sigma : \Rb \to X$ as above, let 
$$I_0 :=  \left\{n \in \Nb : S^{n} \mathbf{g} \in A_k \text{ and } \dist_X(g_1\cdots g_n o , \sigma|_{[0, + \infty)}) \leq R_1 \right\}.$$
Then let 
$$
I_0' : = \left\{ n \in I_0 : g_1 \cdots g_n o \notin \Nc_{R}(\Hsf_2 o) \right\}. 
$$
By Equation~\eqref{eqn:RW track RW nonsing0}, 
\begin{align*}
\liminf_{N \to + \infty} & \frac{1}{N} \# \{ 1 \le n \le N : \dist_X(g_1 \cdots g_n o, \Hsf_2 o) < R \} \\
& \geq \liminf_{N \to + \infty} \frac{1}{N} \#(I_0 \cap [1,N]) - \limsup_{N \to + \infty} \frac{1}{N} \#(I_0' \cap [1,N]) \\
& > 1-\epsilon/2 - \limsup_{N \to + \infty} \frac{1}{N} \#(I_0' \cap [1,N])
\end{align*}
and so it suffices to show that 
$$
 \limsup_{N \to + \infty} \frac{1}{N} \#(I_0' \cap [1,N]) \leq \epsilon/2. 
 $$
 This is clearly true if $I_0'$ is finite and so we can assume that $I_0'$ is infinite. 
 
Fix a maximal $k$-separated set $I \subset I_0'$, i.e. $\abs{n-m} \geq k$ for all distinct $m,n \in I$. Then by maximality, 
$$
I_0' \subset \bigcup_{n \in I} [n-k,n+k]
 $$
and so 
$$
 \limsup_{N \to + \infty} \frac{1}{N} \#(I_0' \cap [1,N]) \leq  \limsup_{N \to + \infty} \frac{2k+1}{N} \#(I \cap [1,N]).
 $$
 
 Enumerate $I=\{ n_1 < n_2 < \cdots\}$ and for each $j \in \Nb$ fix $t_j \in [0,+\infty)$ with 
 $$
 \dist_X(g_1 \cdots g_{n_j} o, \sigma(t_j)) \leq R_1. 
 $$
 Notice that 
 $$
 \sigma([t_j-R_1, t_j+R_1]) \subset \Nc_{R_1+a R_1 + K}( g_1 \cdots g_{n_j} o) 
 $$
 and so 
  $$
 \sigma([t_j-R_1, t_j+R_1]) \cap \Nc_{R_2+\kappa}( \Hsf_2o)=\emptyset.  
 $$
Further, if $n_i < n_j$, then 
$$
\dist_X(g_1 \cdots g_{n_i} o, g_1 \cdots g_{n_j} o) =\dist_X(o, g_{n_i+1} \cdots g_{n_j} o)> 2 R_1 (1 + a) + K
$$
since $S^{n_i}\mathbf g \in A_k$ and $I$ is $k$-separated. So 
$$
\abs{t_i-t_j} \geq \frac{1}{a} \dist_X( \sigma(t_i),\sigma(t_j)) - \frac{K}{a} > 2R_1
$$
and hence 
$$
[t_i-R_1, t_i+R_1] \cap [t_j-R_1, t_j+R_1] =\emptyset.
$$

Next let $T_N = \max \{ t_j : n_j \leq N\}$. Then the above implies that 
$$
2R_1 \cdot \#(I \cap [1,N]) \leq  \abs{ \left\{ t \in [-R_1,T_N+R_1] : \sigma(t) \notin \Nc_{R_2 + \kappa}\left( \Hsf_2 o\right)\right\} }.
$$
Notice that 
$$
t_j \leq a\dist_X(\sigma(t_j), \sigma(0)) + a K \leq a \dist_X( g_1 \cdots g_{n_j} o, \sigma(0)) + a R_1 + a K
$$
and so
$$
\limsup_{N \rightarrow + \infty} \frac{T_N}{N}  \leq a \ell(\mathsf{m}_1).
$$ 
Then Equation~\eqref{eqn:RW track RW nonsing} implies that 
$$
\limsup_{N \rightarrow + \infty}\frac{1}{N} \#(I \cap [1,N]) \leq \frac{  a \ell(\mathsf{m}_1) }{2R_1} \epsilon_1 
$$
and so 
$$
\limsup_{N \rightarrow + \infty}\frac{1}{N} \#(I_0' \cap [1,N]) \leq (2k+1)\frac{  a \ell(\mathsf{m}_1) }{2R_1} \epsilon_1 < \epsilon/2
$$
by our choice of $\epsilon_1$. 

Thus 
$$
\liminf_{N \to + \infty} \frac{1}{N} \# \{ 1 \le n \le N : \dist_X(g_1 \cdots g_n o, \Hsf_2 o) < R \} > 1 - \epsilon,
$$
as desired. Now replacing $E$ with its image under the projection $\Gsf^{\Zb} \to \Gsf^{\Nb}$ finishes the proof.
\qed

\section{Random walks on normal subgroups} \label{sec:normal}

In this section, suppose  $\Gsf$ is a  finitely generated group, $\Gsf$ acts by homeomorphisms on a compact metrizable space $Y$, and $\Hsf \triangleleft  \Gsf$ is a normal subgroup with $\Gsf / \Hsf \cong \Zb^k$ where $k \in \{1,2\}$. Also, let $\abs{\cdot}$ denote a word length on $\Gsf$ with respect to some fixed generating set.

Recall that a probability measure $\nu$ on $Y$ is \emph{$\mathsf m$-stationary} if 
$$
m * \nu = \nu.
$$
We call $\mathsf{m}$ \emph{symmetric} if $\mathsf{m}(g) = \mathsf{m}(g^{-1})$ for all $g \in \Gsf$.

\begin{proposition} \label{prop:normal subgps}
    Suppose $\mathsf{m}$ is a symmetric probability measure on $\Gsf$ whose support generates $\Gsf$ as a group and $\nu$ is a $\mathsf m$-stationary measure on $Y$.  If $\mathsf{m}$ has finite $k^{th}$ moment for $\abs{\cdot}$, i.e.,    
    $$
\sum_{g \in \Gsf} \abs{g}^k \mathsf{m}(g) < +\infty,
$$
then there exists a symmetric probability measure $\mathsf{m}'$ on $\Hsf$ whose support generates $\Hsf$ as a group and where $\mathsf{m}'*\nu=\nu$. 
\end{proposition}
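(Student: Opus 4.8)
The plan is to realize $\mathsf{m}'$ as the law of the $\mathsf{m}$-driven random walk stopped at its first return to $\Hsf$. Concretely, set $\tau(\mathbf g) := \min\{n \ge 1 : g_1\cdots g_n \in \Hsf\}$ and let $\mathsf{m}'$ be the distribution of $g_1\cdots g_\tau$ on $(\Gsf^{\Nb},\mathsf{m}^{\Nb})$. The first task is to check $\tau < \infty$ almost surely, so that $\mathsf{m}'$ is a probability measure. For this I would push forward through the quotient homomorphism $\pi\colon\Gsf\to\Gsf/\Hsf\cong\Zb^k$: since $\pi$ is a homomorphism, $\norm{\pi(g)} \le C\abs{g}$ for a fixed norm on $\Zb^k$ and a constant $C$, so $\pi_*\mathsf{m}$ is a probability measure on $\Zb^k$ with finite $k^{th}$ moment, and it is symmetric, hence mean zero. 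Since $\tau$ is exactly the first return of the $\pi_*\mathsf{m}$-walk to $0 \in \Zb^k$, the classical Chung--Fuchs recurrence criterion --- a mean-zero walk on $\Zb$ with finite first moment, and a mean-zero walk on $\Zb^2$ with finite second moment, are recurrent --- gives $\tau < \infty$ a.s. This recurrence input, precisely calibrated to the hypothesis on the order of the moment, is the essential point; the rest is bookkeeping.

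To verify $\mathsf{m}'*\nu = \nu$ I would use optional stopping. For $f \in C(Y)$ the random variables $X_n := \int_Y f(g_1\cdots g_n\, y)\,d\nu(y)$ form a martingale bounded by $\norm{f}_{\infty}$: a one-line computation from $\mathsf{m}*\nu=\nu$ shows $\mathbb{E}[X_{n+1}\mid g_1,\dots,g_n] = \int_Y f(g_1\cdots g_n\, z)\,d(\mathsf{m}*\nu)(z) = X_n$. Applying optional stopping to the bounded martingale $X_{\tau\wedge n}$ and letting $n\to\infty$ (dominated convergence, using $\tau<\infty$ a.s.) gives $\mathbb{E}[X_\tau] = \mathbb{E}[X_0] = \int_Y f\,d\nu$; on the other hand $\mathbb{E}[X_\tau] = \int_\Gsf \int_Y f(g\, y)\,d\nu(y)\,d\mathsf{m}'(g) = \int_Y f\,d(\mathsf{m}'*\nu)$. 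Since $Y$ is compact metrizable, $C(Y)$ separates Borel probability measures, so $\mathsf{m}'*\nu=\nu$.

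For symmetry of $\mathsf{m}'$ I would run a time-reversal argument: since $\mathsf{m}$ is symmetric, $(g_1,\dots,g_n)\mapsto(g_n^{-1},\dots,g_1^{-1})$ preserves $\mathsf{m}^{\otimes n}$, and on $\{g_1\cdots g_n\in\Hsf\}$ the $j^{th}$ partial product of the reversed word, namely $(g_1\cdots g_n)^{-1}(g_1\cdots g_{n-j})$, lies in $\Hsf$ iff $g_1\cdots g_{n-j}$ does; hence reversal maps $\{\tau=n\}$ onto itself and sends $g_1\cdots g_\tau$ to its inverse there, so the law of $g_1\cdots g_\tau$ is invariant under $g\mapsto g^{-1}$. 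For the generation statement, given $h\in\Hsf$ write $h=s_1\cdots s_m$ with all $s_i\in\supp\mathsf{m}$ (possible because $\supp\mathsf{m}$ generates $\Gsf$ and, by symmetry, equals its own inverse), let $0=t_0<t_1<\cdots<t_r=m$ enumerate the times $t$ with $s_1\cdots s_t\in\Hsf$, and set $w_l := s_{t_{l-1}+1}\cdots s_{t_l}\in\Hsf$, so $h=w_1\cdots w_r$. Each $w_l$ lies in $\supp\mathsf{m}'$: the positive-probability event $\{g_1=s_{t_{l-1}+1},\dots,g_{t_l-t_{l-1}}=s_{t_l}\}$ forces $g_1\cdots g_i\notin\Hsf$ for $0<i<t_l-t_{l-1}$ (no visit to $\Hsf$ strictly between consecutive $t$'s) and $g_1\cdots g_{t_l-t_{l-1}}=w_l\in\Hsf$, so $\tau=t_l-t_{l-1}$ and $g_1\cdots g_\tau=w_l$ there. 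Hence $h\in\langle\supp\mathsf{m}'\rangle$, and since $h$ was arbitrary, $\langle\supp\mathsf{m}'\rangle=\Hsf$.

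In summary: (i) recurrence on $\Zb^k$ makes $\tau$ a.s. finite and defines $\mathsf{m}'$; (ii) a bounded-martingale/optional-stopping argument gives $\mathsf{m}'*\nu=\nu$; (iii) time reversal gives symmetry; (iv) an excursion decomposition gives $\langle\supp\mathsf{m}'\rangle=\Hsf$. The main obstacle is step (i) --- invoking Chung--Fuchs recurrence, which is where the dichotomy $k\in\{1,2\}$ and the matching finite-$k^{th}$-moment hypothesis enter --- after which steps (ii)--(iv) are routine.
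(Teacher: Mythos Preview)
Your proposal is correct and uses the same construction as the paper: define $\mathsf{m}'$ as the law of the first return of the $\mathsf{m}$-walk to $\Hsf$, with finiteness of $\tau$ coming from Chung--Fuchs recurrence on $\Zb^k$ (exactly the paper's Lemma~\ref{lem:finite stopping}).

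The one substantive difference is in verifying $\mathsf{m}'*\nu=\nu$. The paper invokes the Furstenberg martingale decomposition to produce conditional measures $\nu_{\mathbf g}$ with $(g_1\cdots g_n)_*\nu\to\nu_{\mathbf g}$ and $\nu=\int\nu_{\mathbf g}\,d\mathsf{m}^{\Nb}$, then computes by partitioning over $\{\tau=n\}$. Your optional-stopping argument on the bounded martingale $X_n=\int f(g_1\cdots g_n\,y)\,d\nu(y)$ is more elementary: it avoids the disintegration machinery entirely and works directly with test functions. Both are valid; yours is shorter. You also supply the time-reversal argument for symmetry and the excursion decomposition for $\langle\supp\mathsf{m}'\rangle=\Hsf$, which the paper asserts in one sentence without proof---your arguments for these are correct and fill a gap the paper leaves to the reader.
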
 

The rest of the section is devoted to the proof of the proposition. For $\mathbf{g} = (g_n) \in \Gsf^{\Nb}$, define the stopping time
$$
\tau(\mathbf{g}) := \inf \{ n \geq 1 : g_1 \cdots g_n \in \Hsf\}.
$$

\begin{lemma} \label{lem:finite stopping}
$\tau$ is finite $\mathsf{m}^{\Nb}$-a.e.
\end{lemma}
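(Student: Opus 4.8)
The plan is to push the walk forward to the abelian quotient $\Gsf/\Hsf \cong \Zb^k$ and invoke the classical recurrence theorem for random walks on $\Zb$ and $\Zb^2$.

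First I would let $\bar\pi : \Gsf \to \Gsf/\Hsf \cong \Zb^k$ denote the quotient homomorphism and set $\bar{\mathsf{m}} := \bar\pi_* \mathsf{m}$, a probability measure on $\Zb^k$. Since $\bar\pi$ is a surjective homomorphism and $\supp \mathsf{m}$ generates $\Gsf$, the support of $\bar{\mathsf{m}}$ generates $\Zb^k$ as a group; and since $\mathsf{m}$ is symmetric, so is $\bar{\mathsf{m}}$. Moreover the moment hypothesis transfers: fixing a finite generating set $\Sigma$ of $\Gsf$ and a word norm $\abs{\cdot}_{\Zb^k}$ on $\Zb^k$, and writing $C := \max_{s \in \Sigma} \abs{\bar\pi(s)}_{\Zb^k}$, one has $\abs{\bar\pi(g)}_{\Zb^k} \le C \abs{g}$ for all $g \in \Gsf$ (write $g$ as a word of length $\abs{g}$ in $\Sigma^{\pm 1}$ and apply $\bar\pi$), so $\sum_{v \in \Zb^k} \abs{v}_{\Zb^k}^k \, \bar{\mathsf{m}}(v) \le C^k \sum_{g \in \Gsf} \abs{g}^k \mathsf{m}(g) < +\infty$. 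In particular $\bar{\mathsf{m}}$ has finite first moment, and being symmetric it has mean zero.

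Next I would observe that under the projection $\mathbf{g} = (g_n) \mapsto (\bar\pi(g_n))_{n}$, the random variable $\tau(\mathbf{g})$ is exactly the first-return time to $0$ of the random walk $\bar\omega_n := \bar\pi(g_1 \cdots g_n)$ on $\Zb^k$ with i.i.d.\ increments distributed by $\bar{\mathsf{m}}$; that is, $\tau(\mathbf{g}) = \inf\{ n \ge 1 : \bar\omega_n = 0\}$. So it suffices to prove that this walk on $\Zb^k$ almost surely returns to $0$. Here the dimension restriction and the moment assumption enter in an essential way: by the Chung--Fuchs recurrence criterion, a zero-mean random walk on $\Zb$ with finite first moment is recurrent, and a zero-mean random walk on $\Zb^2$ with finite second moment is recurrent (the classical two-dimensional recurrence theorem); in both cases $k \in \{1,2\}$ and the finite $k^{th}$ moment of $\bar{\mathsf{m}}$ is precisely what is needed. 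Recurrence is equivalent to $\sum_{n \ge 1} \operatorname{Prob}(\bar\omega_n = 0) = +\infty$, which by the standard renewal identity is equivalent to $\operatorname{Prob}(\exists\, n \ge 1 : \bar\omega_n = 0) = 1$ (note $\supp \bar{\mathsf{m}}$ generates $\Zb^k$, so the walk based at $0$ is not trapped on a proper coset). Hence $\tau < +\infty$ $\mathsf{m}^{\Nb}$-a.e.

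This argument is essentially routine once set up; the only subtlety to be careful about is the matching of hypotheses — in particular that for $k=2$ one genuinely needs the \emph{second} moment, and that in dimension $\ge 3$ the corresponding walk would be transient, so the lemma (and Proposition~\ref{prop:normal subgps}) really does require $k \in \{1,2\}$ together with the $k^{th}$-moment bound.
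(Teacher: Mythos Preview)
Your proof is correct and follows essentially the same approach as the paper: push the walk forward to $\Gsf/\Hsf \cong \Zb^k$, identify $\tau$ with the first return time to $0$, and invoke Chung--Fuchs recurrence for $k \in \{1,2\}$ under the finite $k^{th}$-moment hypothesis. The paper's argument is just a terser version of yours, citing \cite{CF_RW} directly without spelling out the moment transfer or the mean-zero observation.
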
 

\begin{proof} Let $\pi : \Gsf \rightarrow \Gsf / \Hsf$ be the quotient map. Then $\pi_* \mathsf{m}$ induces a symmetric random walk with finite $k^{th}$ moment on $\Gsf / \Hsf \cong \Zb^k$ and $\tau$ represents the first return time of this random walk to $0 \in \Zb^k$. Since $k \in \{1,2\}$, by \cite{CF_RW} this walk is recurrent, which implies that $\tau$ is finite $\mathsf{m}^{\Nb}$-a.e.
\end{proof} 

Next define $\xi : \Gsf^{\Nb} \rightarrow \Hsf$ by $\xi(\mathbf{g}) = g_1 \cdots g_{\tau(\mathbf{g})}$ and let $\mathsf{m}': = \xi_* \mathsf{m}^{\Nb}$. Since  $\mathsf{m}$ is a symmetric probability measure whose support generates $\Gsf$ as a group, $\mathsf{m}'$ is a symmetric probability measure whose support generates $\Hsf$ as a group.

\begin{lemma} 
 $\mathsf{m}'*\nu=\nu$.
\end{lemma}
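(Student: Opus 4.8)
The plan is to show that $\mathsf{m}'$, obtained by ``running the random walk until it first returns to $\Hsf$'', preserves the stationary measure $\nu$. The key point is that $\mathsf{m}'$ is the distribution of $\xi(\mathbf{g}) = g_1 \cdots g_{\tau(\mathbf{g})}$, and we want $\mathsf{m}' * \nu = \nu$.

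First I would condition on the value of the stopping time $\tau$. Writing $\mathsf{m}' = \sum_{n \geq 1} \mathsf{m}'_n$ where $\mathsf{m}'_n$ is the (sub-probability) measure given by $\mathsf{m}'_n(E) = \mathsf{m}^{\Nb}(\{\mathbf{g} : \tau(\mathbf{g}) = n,\ g_1 \cdots g_n \in E\})$, the idea is to relate each $\mathsf{m}'_n$ to the $n$-fold convolution $\mathsf{m}^{*n}$ restricted to paths that avoid $\Hsf$ at all intermediate times. Concretely, $\mathsf{m}'_n$ is supported on $\Hsf$ and $\sum_{n\geq 1}\mathsf{m}'_n(\Hsf) = 1$ by Lemma~\ref{lem:finite stopping}.

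The main computation is the following telescoping/recursive identity. Let $B = \Gsf \smallsetminus \Hsf$ and for $n \geq 1$ let $\mathsf{m}_B^{*n}$ denote the measure assigning to $g$ the mass $\mathsf{m}^{\Nb}(g_1\cdots g_j \in B \text{ for } 1 \le j \le n-1,\ g_1\cdots g_n = g)$. Then $\mathsf{m}'_n = \mathbf{1}_\Hsf \cdot \mathsf{m}_B^{*n}$ and $\mathsf{m}_B^{*(n+1)} = (\mathbf{1}_B \cdot \mathsf{m}_B^{*n}) * \mathsf{m}$. Using $\mathsf{m} * \nu = \nu$ and iterating,
$$
\nu = \mathsf{m}^{*N} * \nu = \sum_{n=1}^{N} (\mathbf{1}_\Hsf \cdot \mathsf{m}_B^{*n}) * \nu + (\mathbf{1}_B \cdot \mathsf{m}_B^{*N}) * \nu = \sum_{n=1}^{N} \mathsf{m}'_n * \nu + (\mathbf{1}_B \cdot \mathsf{m}_B^{*N}) * \nu,
$$
where the decomposition at each step splits a path according to whether it has already hit $\Hsf$. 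Since $\nu$ is a probability measure and the total mass $\|\mathbf{1}_B \cdot \mathsf{m}_B^{*N}\| = \mathsf{m}^{\Nb}(\tau > N) \to 0$ as $N \to \infty$ by Lemma~\ref{lem:finite stopping}, the error term $(\mathbf{1}_B \cdot \mathsf{m}_B^{*N}) * \nu$ tends to the zero measure. Taking $N \to \infty$ gives $\nu = \sum_{n \geq 1} \mathsf{m}'_n * \nu = \mathsf{m}' * \nu$.

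The main obstacle is bookkeeping the decomposition cleanly: one must verify the recursion $\mathsf{m}_B^{*(n+1)} = (\mathbf{1}_B \cdot \mathsf{m}_B^{*n}) * \mathsf{m}$ directly from the definition of the stopping time (a path of length $n+1$ first returns to $\Hsf$ at some time $\le n+1$ iff either its length-$n$ prefix already did, contributing nothing new, or it stayed in $B$ through time $n$ and the last step may or may not land in $\Hsf$), and justify the convergence $(\mathbf{1}_B \cdot \mathsf{m}_B^{*N}) * \nu \to 0$ in an appropriate sense (total variation suffices, since the total mass goes to zero and $\nu$ is a probability measure). No moment condition is needed here beyond what guarantees $\tau < \infty$ a.e., which was already established.
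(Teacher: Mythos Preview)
Your argument is correct and takes a genuinely different route from the paper. The paper invokes the martingale convergence theorem to produce the Furstenberg limit measures $\nu_{\mathbf{g}} = \lim_n (g_1\cdots g_n)_*\nu$, uses the cocycle relation $\nu_{(g_1,g_2,\dots)} = (g_1)_*\nu_{(g_2,g_3,\dots)}$, and then computes $\mathsf{m}'*\nu$ by partitioning $\Gsf^{\Nb}$ according to the value of $\tau$ and re-assembling to get $\int \nu_{\mathbf{g}}\,d\mathsf{m}^{\Nb} = \nu$. Your approach avoids the limit measures entirely: the first-passage decomposition $\mathsf{m}^{*N} = \sum_{n=1}^N \mathsf{m}'_n * \mathsf{m}^{*(N-n)} + \mathbf{1}_B\cdot \mathsf{m}_B^{*N}$ together with $\mathsf{m}^{*j}*\nu=\nu$ gives the telescoped identity directly, and the remainder goes to zero in total variation because $\|\mathbf{1}_B\cdot\mathsf{m}_B^{*N}\| = \mathsf{m}^{\Nb}(\tau>N)\to 0$. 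This is more elementary---it uses only stationarity and a.e.\ finiteness of $\tau$, with no appeal to martingale convergence or the disintegration of $\nu$---and in fact works for any $\mathsf{m}$-stationary probability measure on any measurable $\Gsf$-space, not just a compact metrizable one. The paper's route, on the other hand, is the one that generalizes naturally if one later wants to identify Poisson boundaries or exploit the map $\mathbf{g}\mapsto\nu_{\mathbf{g}}$.
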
 

\begin{proof} 
Let ${\rm Prob}(Y)$ denote the space of probability measures on $Y$. By the martingale convergence theorem, there exists a measurable map $\mathbf{g} \in  \Gsf^{\Nb} \mapsto \nu_{\mathbf{g}} \in  {\rm Prob}(Y)$ so that:
\begin{enumerate}
\item  For $\mathsf{m}^{\Nb}$-a.e. $\mathbf{g} = (g_n) \in \Gsf^{\Nb}$,
$$
(g_1 \cdots g_n)_* \nu \rightarrow \nu_{\mathbf{g}}
$$ as $n \to + \infty$,
\item 
$\nu = \int \nu_{\mathbf{g}} d \mathsf{m}^{\Nb} (\mathbf{g})$
\end{enumerate} 
(see for instance ~\cite[Lemmas 2.17 and 2.19]{BQ_book}). Notice that (1) implies that 
$$
\nu_{( g_1,g_2, g_3, \dots)} = (g_1)_* \nu_{(g_2, g_3, \dots)}
$$
for $\mathsf{m}^{\Nb}$-a.e. $\mathbf{g} = (g_n) \in \Gsf^{\Nb}$.

For $n \in \Nb$, let $\pi_n : \Gsf^{\Nb} \rightarrow \Gsf^n$ be the projection onto the first $n$ factors and let 
$$
A_n : = \{ \pi_n(\mathbf{g}) : \tau(\mathbf{g}) = n\}.
$$
Then the sets $A_n \times \Gsf^{\Nb} \subset \Gsf^{\Nb}$ are disjoint and their union has full $\mathsf{m}^{\Nb}$-measure in $\Gsf^{\Nb}$ by Lemma \ref{lem:finite stopping}. So writing $\mathbf{h} = (h_n) \in \Gsf^{\Nb}$,
\begin{align*}
\mathsf{m}' * \nu  & = \sum_{h \in \Hsf} \mathsf{m}'(h) h_* \nu  = \int \int (h_1 \cdots h_{\tau(\mathbf{h})})_* \nu_{\mathbf{g}} d \mathsf{m}^{\Nb}(\mathbf{h}) d \mathsf{m}^{\Nb}(\mathbf{g}) \\
& = \int \int  \nu_{(h_1,\dots, h_{\tau(\mathbf{h})}, \mathbf{g})} d \mathsf{m}^{\Nb}(\mathbf{h}) d \mathsf{m}^{\Nb}(\mathbf{g}) \\
& = \int \sum_{n=1}^\infty \sum_{ (h_1,\dots, h_n) \in A_n}  \nu_{(h_1,\dots, h_n,\mathbf{g})} d\mathsf{m}(h_1) \cdots d\mathsf{m}(h_n) d \mathsf{m}^{\Nb}(\mathbf{g}) \\
& = \sum_{n=1}^\infty \int_{A_n \times \Gsf^{\Nb}}  \nu_{\mathbf{g}}d \mathsf{m}^{\Nb}(\mathbf{g})= \int \nu_{\mathbf{g}} d \mathsf{m}^{\Nb} (\mathbf{g})=\nu. \qedhere
\end{align*}
\end{proof}

\begin{remark}
   We remark that when $\Gsf / \Hsf \cong \Zb$ and the symmetric probability measure $\mathsf{m}$ has finite support, then the induced probability measure $\mathsf{m}'$ has finite $p^{th}$ moment for all $p < 1/2$, i.e., 
   $$
   \sum_{h \in \Hsf} \abs{h}^{p} \mathsf{m}'(h) < + \infty
   $$
   where $\abs{\cdot}$ is a word length on $\Gsf$ with respect to some fixed generating set.

   To see this, note that for $C_0 := \max_{g \in \supp \mathsf{m}} \abs{g} < + \infty$, we have $\abs{\xi(\mathbf{g})} \le C_0 \cdot \tau(\mathbf{g})$. Hence, 
   $$
   \sum_{h \in \Hsf} \abs{h}^{p} \mathsf{m}'(h) = \int \abs{\xi(\mathbf{g})}^p d \mathsf{m}^{\Nb}(\mathbf{g}) \le C_0^p \cdot \sum_{n = 1}^{\infty} n^p \mathsf{m}^{\Nb} \left( \left\{ \mathbf{g} \in \Gsf^{\Nb} : \tau(\mathbf{g}) = n \right\} \right). 
   $$
   So for some $C_1 > 0$,
   $$
\sum_{h \in \Hsf} \abs{h}^{p} \mathsf{m}'(h) \le C_1 \cdot \sum_{n = 1}^{\infty} n^{p-1} \mathsf{m}^{\Nb} \left( \left\{ \mathbf{g} \in \Gsf^{\Nb} : \tau(\mathbf{g}) > n \right\} \right).
   $$
   Note that $\mathsf{m}^{\Nb} \left( \left\{ \mathbf{g} \in \Gsf^{\Nb} : \tau(\mathbf{g}) > n \right\} \right)$ is the same as the probability that the random walk on $\Gsf / \Hsf \cong \Zb$ generated by $\pi_* \mathsf{m}$ starting from $0$ does not return to $0$ in $n$ steps, where $\pi : \Gsf \to \Gsf / \Hsf$ is the quotient map.  This probability is asymptotic to a constant multiple of $n^{-1/2}$ \cite[Proposition 4.2.4]{LL_RW_book}. Therefore, for some $C > 0$,
   $$
\sum_{h \in \Hsf} \abs{h}^{p} \mathsf{m}'(h) \le C \cdot \sum_{n = 1}^{\infty} n^{p-3/2}
   $$
   and the right hand side converges when $p < 1/2$.
\end{remark}

\subsection{An Example: fibered hyperbolic 3-manifolds} As in the introduction, let $\Hb^3$ denote real hyperbolic 3-space. The boundary at infinity $\partial\Hb^3$ is diffeomorphic to the two-sphere and $\Isom(\Hb^3)$ acts by diffeomorphisms. Let ${\rm Leb}$ denote a measure on $\partial\Hb^3$ induced by a smooth volume form. 

Fix a torsion-free cocompact lattice $\Gamma < \Isom(\Hb^3)$ such that the closed hyperbolic 3-manifold $M = \Gamma \backslash \Hb^3$ admits a fibration
$$
S \to M \to \Sb^1
$$
over the circle with a fiber $S \subset M$. Then we can view $\pi_1(S)$ as an infinite-index normal subgroup in $\Gamma$, with the quotient $\Ga / \pi_1(S) \cong \Zb$.
 
Using Proposition~\ref{prop:normal subgps} and work of Ballmann--Ledrappier, we will prove the following. 

\begin{proposition} \label{prop:fibration}
   There exists a probability measure $\mathsf m'$ with $\pi_1(S)=\ip{\supp \mathsf{m}'}$ whose associated stationary measure on $\partial \Hb^3$ is absolutely continuous with respect to ${\rm Leb}$. 

\end{proposition}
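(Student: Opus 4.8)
The plan is to realize a Lebesgue-class harmonic measure on $\partial\Hb^3$ by a \emph{symmetric} random walk on $\Gamma$, and then transport the problem down to the normal subgroup $\pi_1(S)$ by Proposition~\ref{prop:normal subgps}, using crucially that $\Gamma/\pi_1(S)\cong\Zb$, so that $k=1$ there.

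First I would invoke the discretization theory of Lyons--Sullivan \cite{LS_RW} and Ballmann--Ledrappier \cite{BL_harmonic}. Since $M=\Gamma\backslash\Hb^3$ is a closed hyperbolic $3$-manifold, this theory provides a symmetric probability measure $\mathsf{m}$ on $\Gamma$ whose support generates $\Gamma$ as a group, which has finite first moment (indeed finite exponential moment) for a word length $\abs{\cdot}$ on $\Gamma$, and for which the Lebesgue measure ${\rm Leb}$ on $\partial\Hb^3$ is $\mathsf{m}$-stationary. Concretely, the exit measure of Brownian motion on $\Hb^3$ based at a lift of a point of $M$ is the round measure on $S^2=\partial\Hb^3$ by rotational symmetry, and reversibility of the Laplacian is what lets one arrange the discretizing measure to be symmetric.

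Next I would apply Proposition~\ref{prop:normal subgps} with $\Gsf=\Gamma$, the compact metrizable space $Y=\partial\Hb^3$ (on which $\Gamma$ acts by homeomorphisms), the normal subgroup $\Hsf=\pi_1(S)\triangleleft\Gamma$ with $\Gamma/\pi_1(S)\cong\Zb$, so $k=1$, the measure $\mathsf{m}$ constructed above, and the $\mathsf{m}$-stationary measure $\nu={\rm Leb}$; here the finite first moment of $\mathsf{m}$ is exactly the finite $k$th moment hypothesis of that proposition. This yields a symmetric probability measure $\mathsf{m}'$ on $\pi_1(S)$ whose support generates $\pi_1(S)$ as a group and which satisfies $\mathsf{m}'\ast{\rm Leb}={\rm Leb}$; in particular $\pi_1(S)=\langle\supp\mathsf{m}'\rangle$ and ${\rm Leb}$ is $\mathsf{m}'$-stationary on $\partial\Hb^3$.

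Finally I would check that ${\rm Leb}$ is the stationary measure \emph{associated} to the random walk driven by $\mathsf{m}'$, i.e.\ that it is the unique one. Viewing $\pi_1(S)$ as a discrete, hence metrically properly acting, non-elementary subgroup of $\Isom(\Hb^3)$ --- two independent hyperbolic elements of the Fuchsian group $\pi_1(S)$ map to two independent loxodromic isometries of $\Hb^3$, since a shared fixed point at infinity would contradict faithfulness and discreteness --- and noting that symmetry of $\mathsf{m}'$ makes the semigroup generated by $\supp\mathsf{m}'$ equal to all of $\pi_1(S)$, the uniqueness theorem of Maher--Tiozzo \cite{MaherTiozzo2018} gives a unique $\mathsf{m}'$-stationary probability measure on $\partial\Hb^3$; it must be ${\rm Leb}$, which is trivially absolutely continuous with respect to ${\rm Leb}$, proving the proposition. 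I expect the only delicate point to be the very first step --- producing a single measure on $\Gamma$ that is simultaneously symmetric, of finite first moment, supported on a generating set, and has ${\rm Leb}$ as its stationary measure --- and this is precisely what the cited discretization results supply; once it is in hand, the rest is a direct application of Proposition~\ref{prop:normal subgps} together with standard facts about random walks on Gromov hyperbolic spaces.
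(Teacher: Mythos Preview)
Your proposal is correct and follows essentially the same route as the paper: invoke Ballmann--Ledrappier to obtain a symmetric finite-first-moment measure $\mathsf{m}$ on $\Gamma$ with Lebesgue-class stationary measure, then apply Proposition~\ref{prop:normal subgps} with $k=1$. The paper phrases the first step slightly more cautiously (the stationary measure $\nu$ is only asserted to be absolutely continuous with respect to ${\rm Leb}$, and the \v{S}varc--Milnor lemma is invoked explicitly to pass from the hyperbolic metric to the word metric), and it omits your final uniqueness paragraph entirely---the paper simply records that $\nu$ is an $\mathsf{m}'$-stationary measure and stops, reading ``associated stationary measure'' in the statement as existence rather than uniqueness. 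Your additional Maher--Tiozzo step is correct but not needed for the proposition as the authors intend it.
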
 

\begin{proof} By ~\cite{BL_harmonic} there exists a symmetric probability measure $\mathsf m$ with $\Gamma=\ip{\supp \mathsf{m}}$ whose unique stationary measure $\nu$ is absolutely continuous with respect to ${\rm Leb}$ and where 
$$
\sum_{g \in \Gamma} \dist_{\Hb^3}(go,o) \mathsf{m}(g) < +\infty.
$$
Since $\Gamma$ acts cocompactly on $\Hb^3$, the \v Svarc--Milnor lemma implies that 
$$
\sum_{g \in \Gamma} \abs{g} \mathsf{m}(g) < +\infty,
$$
where $\abs{\cdot}$ is a word length on $\Gamma$ with respect to some fixed finite generating set. Since $\Gamma / \pi_1(S) \cong \Zb$, by Proposition~\ref{prop:normal subgps} there exists a probability measure $\mathsf m'$ with $\pi_1(S)=\ip{\supp \mathsf{m}'}$ whose associated stationary measure on $\partial \Hb^3$ is $\nu$, and hence absolutely continuous with respect to ${\rm Leb}$. 
\end{proof}

\end{document}